\newtheorem{thm}{Theorem}
\newtheorem{cor}{Corollary}
\newtheorem{rmk}{Remark}
\newcommand{\be}{\begin{equation}}
\newcommand{\ee}{\end{equation}}
\newcommand{\Dx}{\Delta x}
\newcommand{\Dt}{\Delta t}
\newcommand{\dx}{\Delta x}
\newcommand{\dt}{\Delta t}
\newcommand{\aij}{\alpha_{i,j}}
\newcommand{\bij}{\beta_{i,j}}
\newcommand{\m}[1]{\mathbf{#1}}
\newcommand{\mA}{\m{A}}
\newcommand{\mS}{\m{S}}
\newcommand{\mR}{\m{R}}
\newcommand{\mP}{\m{P}}
\renewcommand{\v}[1]{\boldsymbol{#1}}
\newcommand{\vb}{\v{b}}
\newcommand{\vc}{\v{c}}
\newcommand{\ve}{\v{e}}
\newcommand{\vu}{\v{u}}
\newcommand{\va}{\v{a}}
\newcommand{\sspcoef}{\mathcal{C}}
\newcommand{\ceff}{\sspcoef_{\textup{eff}}}
\newcommand{\DtFE}{\Dt_{\textup{FE}}}
\newcommand{\tDtFE}{\tilde{\Dt}_{\textup{FE}}}
\renewcommand{\v}[1]{\mathbf{#1}}
\title{Strong Stability Preserving Integrating Factor Runge--Kutta Methods}
\author{%
Leah Isherwood\thanks{Mathematics Department, University of Massachusetts Dartmouth, 285 Old Westport Road,
North Dartmouth MA 02747.} \and
Zachary J. Grant\footnotemark[1] \and 
Sigal Gottlieb\footnotemark[1]
}
\begin{document}
\maketitle


\bibliographystyle{siam}

 {\small
{\bf Abstract.} Strong stability preserving (SSP)  Runge--Kutta methods are often desired when evolving in time  
problems that have two components  that have very different time scales. 
Where the SSP property is needed, it has been shown that  implicit and implicit-explicit methods 
have very restrictive time-steps and are therefore not efficient. For this reason, SSP
integrating factor methods may offer an attractive alternative to traditional time-stepping methods
for problems with  a linear component that is stiff and a  nonlinear component that is not.
However, the strong stability properties of integrating factor Runge--Kutta methods have not been established.
In this work we show that it is possible to define explicit integrating factor Runge--Kutta methods that 
preserve the desired strong stability properties satisfied by each of the two components when coupled with 
forward Euler time-stepping, or even given weaker conditions. 
We define sufficient conditions for an explicit integrating factor Runge--Kutta method
to be SSP, namely that they are based on explicit SSP Runge--Kutta methods with non-decreasing abscissas.
We find such methods of up to fourth order and up to ten stages,  analyze their SSP coefficients, and 
 prove their optimality  in a few cases. We test these methods to demonstrate their convergence and to show that
 the  SSP time-step predicted by the theory is generally sharp, and that the non-decreasing abscissa condition is needed
 in our test cases. Finally, we show that on  typical total variation diminishing linear and nonlinear test-cases our new
 explicit SSP integrating factor Runge--Kutta methods out-perform the corresponding explicit SSP Runge--Kutta methods,
 implicit-explicit SSP Runge--Kutta methods, and some well-known exponential time differencing methods.
 }

\section{Introduction\label{sec:intro}}

When numerically solving a hyperbolic partial differential  equation (PDE) of the form
\begin{eqnarray}
\label{PDE}
U_t +f(U)_x = 0,
\end{eqnarray}
the behavior of the numerical solution depends on properties of the spatial 
discretization combined with the time discretization.  
For smooth solutions, stability can be determined 
by analyzing the $L_2$ stability properties of the discretization applied to the linear problem.
However, when dealing with a non-smooth solution, stability in the $L_2$ norm is not sufficient to ensure
that the numerical solution will converge \cite{LeVequeBook}.  This is due to the presence of oscillations that prevents 
the approximation from converging uniformly.  To ensure that the numerical method
does not allow stability-destroying oscillations to form, we require that it satisfy stability properties
in, e.g,  the maximum norm or in the TV semi-norm.

Thus, to prove stability of numerical methods for nonlinear hyperbolic problems
with discontinuous solutions, we  need to analyze the nonlinear, non-inner-product stability properties
of a highly nonlinear,  complex spatial discretization combined with a high order time discretization. 
This is a difficult, sometimes untenable task. Instead, a method-of-lines formulation is generally followed, 
and a spatial discretization is developed that satisfies nonlinear, non-inner-product stability properties
when coupled with the forward Euler time stepping method.  In practice, higher order time discretizations
are needed. Strong stability preserving (SSP) time-discretizations were created \cite{shu1988b, shu1988} 
to allow the nonlinear non-inner product
stability properties of the spatial discretizations coupled with forward Euler to be 
immediately extended to all SSP higher order time-discretizations.

\subsection{Background\label{background}}
Linear stability theory is an indispensable tool used to establish the convergence of a numerical
method when numerically solving PDEs.  
Linear stability is necessary and sufficient 
for convergence  of a consistent linear 
numerical method when the PDE is  linear \cite{strikwerda1989}.  
When the PDE is nonlinear, if a numerical method is consistent and its {\em linearization} 
is $L_2$ stable and adequately dissipative, then convergence can be proved 
 for sufficiently smooth  problems  \cite{strang1964}.
However, discontinuous solutions often arise in the solution of hyperbolic conservation laws of the form
\eqref{PDE}, and when the solution has   discontinuities, 
linear stability theory is no longer sufficient for convergence.  

A famous example of this is that when the linearly $L_2$ stable second order Lax-Wendroff scheme 
is applied to Burgers'  equation, the method is nonlinearly unstable near stagnation points and does not converge \cite{majda1978}.  
This example demonstrates that to obtain convergence for  a nonlinear PDE with a discontinuous solution,
 some kind of nonlinear stability is necessary in order to guarantee convergence.  

Even in the linear case, $L_2$ stability is not enough for uniform convergence when dealing with solutions with discontinuities. 
For example, although a second order Lax-Wendroff scheme is strongly stable in the $L_2$ norm, when it is applied 
to a linear advection equation ($f(U)= a U$ in \eqref{PDE} above) with a step-function initial 
condition, the numerical solution will always have an overshoot or undershoot near the discontinuity  \cite{Lax2006}. 
Furthermore,  this is true not only for the second order Lax-Wendroff but indeed  
{\em any} linear, consistent, finite difference scheme  of at least second  order accuracy will develop a  
an overshoot or undershoot that prevents uniform convergence \cite{Lax2006}.

These two examples show that $L_2$ linear stability is not the relevant property when we desire well-behaved numerical 
solutions of hyperbolic PDEs with discontinuous solutions.
However, if we can prevent oscillations from forming by requiring stability in the maximum 
norm or the TV semi-norm, we can obtain uniform convergence \cite{LeVequeBook}. 
Consequently, a tremendous amount of effort
has been placed on the development of high order spatial discretizations
which, when coupled with the forward Euler time stepping method, have
the desired nonlinear stability properties for approximating discontinuous 
solutions of hyperbolic PDEs (see, e.g. \cite{harten1983, 
osher1984, sweby1984, cockburn1989, kurganov2000, tadmor1998, liu1994}).

However, for actual computation, higher order time discretizations 
are usually needed. There is no guarantee that a spatial discretization that 
is strongly stable in some desired norm or semi-norm 
(e.g., $L_{\infty}$, or $TV$) for a nonlinear problem under forward Euler 
integration will possess the same nonlinear stability property 
when coupled with a linearly stable higher order time discretization. 
Strong stability preserving methods were created to address this need.

\subsection{SSP methods\label{SSPintro}}
Explicit strong stability preserving (SSP) Runge--Kutta methods were  first developed in \cite{shu1988b,shu1988}
for use in conjunction with total variation diminishing (TVD) spatial discretizations  for 
hyperbolic conservation laws \eqref{PDE} with discontinuous solutions.
 These spatial discretizations of $f(U)_x$ ensure that when the resulting semi-discretized system of 
 ordinary differential equations (ODEs)
\begin{eqnarray}
\label{ode}
u_t = F(u),
\end{eqnarray}
is  evolved in time using the 
{\em forward Euler  method}, a strong stability property 
\begin{eqnarray} \label{FEstrongstability}
\|u^{n+1} \| = \| u^n + \dt F(u^{n}) \| \leq \| u^n \| 
\end{eqnarray}
is satisfied,  under some  step size restriction
\begin{eqnarray} \label{FEcond}
0 \leq \dt \leq \DtFE.
\end{eqnarray}
%
These TVD spatial discretizations are designed to satisfy the strong stability property 
\begin{eqnarray} \label{monotonicity}
\|u^{n+1} \| \leq \|u^{n} \| 
\end{eqnarray}
{\em when coupled with the forward Euler time discretization}. However, in actuality a 
higher order time integrator is desired, for both accuracy and linear stability reasons.
If we can  re-write a higher order time discretization as a convex combination of forward Euler steps,
we can ensure that any convex functional property \eqref{monotonicity} that is satisfied 
by the forward Euler method will still be satisfied by the  higher order time discretization, perhaps under a different time-step.

For example, we can write an  $s$-stage explicit Runge--Kutta method in Shu-Osher form \cite{SSPbook2011}:
\begin{eqnarray}
\label{rkSO}
u^{(0)} & =  & u^n, \nonumber \\
u^{(i)} & = & \sum_{j=0}^{i-1} \left( \aij u^{(j)} +
\dt \bij F(u^{(j)}) \right), \; \; \; \; i=1, . . ., s\\
 u^{n+1} & = & u^{(s)} . \nonumber
\end{eqnarray}
Note that for consistency, we must have $\sum_{j=0}^{i-1} \aij =1$.
If all the coefficients $\aij$ and $\bij$ are non-negative, and a given $\aij$ is zero only if its corresponding $\bij$ is zero,
then each stage can be rearranged into a convex combination of forward Euler steps
\[
\| u^{(i)}\|  =  
\left\| \sum_{j=0}^{i-1} \left( \aij u^{(j)} + \dt \bij F(u^{(j)}) \right) \right\|   \nonumber 
 \leq   \sum_{j=0}^{i-1} \aij  \, \left\| u^{(j)} + \dt \frac{\bij}{\aij} F(u^{(j}) \right\|  
 \leq  \|u^n\| , \]
where the final inequality follows from  \eqref{FEstrongstability} and  \eqref{FEcond} ,
provided that  the time-step satisfies 
\begin{eqnarray}
\dt \leq \min_{i,j} \frac{\aij}{\bij} \DtFE.
\end{eqnarray}
(Note that  if any of the $\beta$'s are equal to zero, the corresponding ratio is considered infinite).

It is clear from this example that  whenever we can re-write an explicit Runge--Kutta method as a  convex combination
of forward Euler steps, the forward Euler condition  \eqref{FEstrongstability}  will be {\em preserved} by the higher-order time 
discretizations, under the  time-step restriction 
$\Dt \le \sspcoef \DtFE$ 
where  $\sspcoef = \min_{i,j} \frac{\aij}{\bij}$.   As long as $ \sspcoef  >0$, the method is 
called {\em strong stability preserving} (SSP) with {\em SSP coefficient}
 $\sspcoef$ \cite{shu1988b}. 

This form also  ensures   internal stage strong stability,  i.e.,
$\| u^{(i+1)} \| \leq \| u^{(i)} \| $ at each stage $i$ of the time-stepping, under the same time-step restriction.
The internal stage monotonicity property is important in simulations that involve pressure, density, or water height,
in which a negative value even at the intermediate stage is not acceptable as it may not allow the simulation to proceed \cite{HesthavenCLbook}.
Positivity preserving limiters that prevent this from occurring are typically designed and proved for use with a forward Euler time-stepping
and thus naturally extend to SSP time stepping methods
with internal stage monotonicity
 \cite{HuAdamsShu2012,ZhangShu2010positivity,ZhangShu2011positivity, ZhangShu2012}.

We observe that in the original papers  \cite{shu1988b, shu1988}, 
the  term $\| \cdot \|$ in Equation  \eqref{FEstrongstability} above  represented
the total variation semi-norm. In general, though, the strong stability  preservation property holds
for any semi-norm, norm, or convex functional, as determined by the design of the spatial discretization.
The only requirements are that the forward Euler condition   \eqref{FEstrongstability}  holds, and that the time-discretization 
can be decomposed into a convex combination of forward Euler steps  with  $ \sspcoef  >0$, as above.

 Clearly, this convex combination condition is a  sufficient condition for strong stability preservation.
In fact, it has been shown that it is also necessary for  strong stability preservation
 \cite{SSPbook2011, kraaijevanger1991,spijker2007}. This means that 
if a method cannot be decomposed into a convex combination of forward Euler steps, then  we can always find 
some ODE with some initial condition such that the forward Euler condition is satisfied but the method
does not satisfy the strong stability condition for any positive time-step \cite{SSPbook2011}.
The observation that there are significant  connections between SSP theory and 
contractivity theory \cite{ferracina2004, ferracina2005,higueras2004a, higueras2005a} has led to 
many results on SSP methods as well as development  of 
new optimal and efficient SSP methods \cite{ferracina2008,
ketcheson2009, ketcheson2008}.  

It is not always possible to decompose a method into convex combinations of forward Euler steps where
$\sspcoef >0$. In fact in   \cite{kraaijevanger1991,ruuth2001} it was shown that explicit SSP Runge--Kutta methods  
cannot exist for order $p>4$. Furthermore, the value of $\sspcoef$ determines in large part what
the size of an allowable time-step will be, and so we seek methods that  have the largest possible
SSP coefficient. Of course, a more important quantity is the total cost of the time evolution,
 which is related to the allowable time step  relative to the number of function evaluations
at each time-step.  To allow us to compare the efficiency of explicit methods of a given order,
we  define the {\em effective SSP coefficient} $\ceff = \frac{\sspcoef}{s}$
where $s$ is the number of stages (typically the number of function evaluations). 
Unfortunately, all explicit $s$-stage Runge--Kutta 
methods have an SSP bound $\sspcoef \leq s$, and therefore $\ceff \leq 1 $ \cite{SSPbook2011}. Even worse,
this upper bound is not usually attained. However, many efficient explicit SSP Runge--Kutta methods have been found, as
 we discuss in Section \ref{sec:background}.

For smooth problems it is usually the case that implicit methods  (or implicit treatment of stiff terms)
can remove the time-step restriction needed for stability.
In such cases, where the timestep is limited by a linear stability requirement or by an 
inner-product norm nonlinear stability there are  well-known classes of implicit methods (e.g. A-stable, L-stable, B-stable) that
allow the use of arbitrarily large timesteps.  This is not the case for time discretizations of order $p>1$ where the time-step is limited by 
SSP considerations. For first order ($p=1$) it can be shown that if the  spatial discretization is strongly stable in some norm under
forward Euler time integration, then the fully discrete solution will also be strongly stable, in the same norm,
for the implicit Euler method, without any timestep restriction 
\cite{hundsdorfer2003, higueras2004a}. However,  any 
general linear method of order $p>1$ can only be SSP under some finite timestep \cite{spijker1983}.  
Worse yet, the timestep restrictions
for implicit and implicit-explicit (IMEX) SSP methods are not dramatically larger than those for explicit
methods, with the observed  bound of $\sspcoef \leq 2s$, and therefore $\ceff \leq 2 $  
\cite{lenferink1991,ferracina2008,ketcheson2009,SSPIMEX}.

SSP methods are widely used in the solution of hyperbolic PDEs,
with discontinuous solutions, where  linear $L_2$ stability is not enough to ensure convergence.
They have been paired with many spatial discretizations that were specially designed for hyperbolic PDEs.
Some of these spatial discretization approaches  involve numerical methods that directly incorporate 
a non-oscillatory approach, such as ENO \cite{caiden2001,delzanna2002,baiotti2005} and WENO  
\cite{bassano2003,carrillo2003,tanguay2003,feng2004,labrunie2004,balbas2005,zhang2006,pantano2007}
methods,  in a finite difference or finite element setting.
These methods are typically designed and their properties proved with a forward Euler time-stepping, and rely on 
the SSP time-discretization mechanism to extend to higher order in time.
Other approaches include limiters applied to finite difference or discontinuous Galerkin methods  \cite{cockburn2004}.
These limiters may enforce a total variation diminishing (TVD) \cite{sweby1984} or total variation 
bounded (TVB) \cite{shu1987, cockburn1989} solution, or be used to ensure the numerical solution is maximum principle preserving 
\cite{ZhangShu2010maximum, ZhangShu2011maximum}, or positivity preserving
 \cite{ZhangShu2010positivity,ZhangShu2011positivity,ZhangShu2012}.
In these cases, SSP methods have proven particularly popular because for each new limiter proposed 
proofs of the desired  property are generated  for  the method coupled with the first order
forward Euler time-discretization.  Once again, the SSP time-discretization mechanism is needed to extend 
the results on these limiters to higher order in time.

For spectral and pseudospectral methods, Reddy and Trefethen \cite{ReddyTrefethen1990} explored the fact that eigenvalue stability is insufficient
to ensure stable simulations, and discussed the difficulties in a fully-discrete stability analysis.
Gottlieb and Tadmor \cite{GottliebTadmor1991} proved stability of spectral approximations for the forward Euler method;
This is the approach commonly used in spectral methods simulations \cite{HGGbook}. 
The work of Levy and Tadmor \cite{LevyTadmor1998} shows the challenges of going from analysis of semi-discrete stability to
fully discrete stability, as they analyzed the strong stability of Runge--Kutta schemes (including the first order forward Euler method)
for linear problems. 
Gottlieb, Shu, and Tadmor later showed that the stability of the forward Euler method for any linear coercive approximations
 \cite{LevyTadmor1998}
could be easily extended using SSP analysis to a much larger class of Runge-Kutta methods. Furthermore, the SSP approach   
guarantees stability for much larger CFL numbers than the stability analysis  in  \cite{LevyTadmor1998}.
When using spectral methods on nonlinear problems, regularization using filtering \cite{HesthavenGottlieb2001,HGGbook} 
or specially designed viscosity \cite{MadayTadmor1989,Tadmor1990, Ma1989,HGGbook} is needed for the 
simulation to be stable. The stabilization properties of these techniques are proven, 
usually, first on the semi-discrete form, and only then
for the fully discrete form,  in conjunction with a forward Euler step;  the
SSP time-stepping mechanism allows it to be preserved for higher order methods \cite{HGGbook, HesthavenCLbook}.

In addition, SSP time-stepping methods  have been paired 
with other spatial discretizations, including level set methods
\cite{peng1999,caiden2001,enright2002,cheng2003,cockburn2005,jin2005}, 
spectral finite volume methods \cite{sun2006,cheruvu2007}, and
spectral difference methods \cite{wang2005,wang2007a}.  
Examples of application areas where SSP time-stepping methods have been used include:
compressible flow \cite{wang2005}, incompressible flow \cite{patel2005},
viscous flow \cite{sun2006},
two-phase flow \cite{caiden2001,bassano2003}, relativistic flow 
\cite{delzanna2002,baiotti2005,zhang2006}, 
cosmological hydrodynamics \cite{feng2004},
magnetohydrodynamics \cite{balbas2005}, radiation hydrodynamics
\cite{mignone2005}, two-species plasma flow \cite{labrunie2004},
atmospheric transport \cite{cheruvu2007}, large-eddy simulation 
\cite{pantano2007}, Maxwell's equations \cite{cockburn2004}, 
semiconductor devices \cite{carrillo2003}, lithotripsy \cite{tanguay2003},
geometrical optics \cite{cockburn2005}, and
Schrodinger equations \cite{cheng2003,jin2005}.

 \subsection{SSP integrating factor Runge--Kutta methods}
 
 In this work, we are interested in a semi-discretized problem of the form
 \[ u_t = Lu + N(u) \]
 where each component satisfies a forward Euler condition 
\[ 
 \| u^n + \dt L u^n \| \leq \|  u^n\|   \; \; \;   \mbox{for} \; \; \; \dt \leq \tilde{\DtFE}  \]
and 
\[  \| u^n + \dt N(u^n) \| \leq |\ u^n\| \; \; \;   \mbox{for}  \; \; \; \dt \leq {\DtFE} , \]
 (where $\| \cdot \|$ is some convex functional needed for non-linear, non-inner-product stability).
 In the cases of interest, $\tilde{\DtFE} <<  {\DtFE} $, so that 
 $L$ is a linear operator that significantly restricts the allowable time-step.
 As mentioned above, when the SSP properties are a concern, using an implicit-explicit (IMEX) scheme does not
 significantly alleviate the allowable time-step \cite{SSPIMEX}.
This motivates our investigation of integrating factor methods, where the 
 linear component $L u$ is handled exactly, and then 
 the time-step restriction is, at worst, $ \dt \leq {\DtFE}$ coming from the  the nonlinear component $N(u)$. 
 In this work, we discuss the conditions under which 
this process  guarantees that the strong stability property \eqref{monotonicity} is preserved. 
In particular, we show that if we  step the transformed problem forward using 
an SSP Runge--Kutta method where
the abscissas (i.e. the time-levels approximated by each stage) are non-decreasing, 
we obtain a method that preserves the desired  strong stability property.
It is important to note that the efficient use of the proposed methods will depend
heavily on the cost of computation of the matrix exponential. This area of research,
although outside the scope of this work, will be crucial for the practical implementation of
the SSP integrating factor Runge--Kutta methods described in this paper.

The paper is structured as follows: In Section \ref{sec:background} we review 
some known optimal and optimized explicit SSP Runge--Kutta methods of orders $p \leq 4$ and give
the SSP coefficients and effective SSP coefficients of the optimal methods in this class. 
In Section \ref{sec:SSPIF} we describe explicit  integrating factor (also known as Lawson type) Runge--Kutta (IFRK) methods
\cite{lawson1967}.
We prove that when IFRK methods  are based on 
explicit SSP Runge--Kutta methods with  {\em non-decreasing} abscissas, they preserve the strong stability property. 
We also show an example that demonstrates that when using an 
IFRK method based on 
an explicit SSP Runge--Kutta method that has decreasing abscissas, 
the SSP property is violated. 
In Section \ref{sec:optimization} we formulate the optimization problem that will enable us to find
optimized explicit SSP Runge--Kutta methods that have non-decreasing abscissas, and in 
Section  \ref{sec:optimal} we present some optimized methods in this class and their SSP coefficients
and effective SSP coefficients. We also prove the optimality of one of the methods in this class.
In Section  \ref{sec:test} we present numerical examples that show how our explicit SSP
integrating factor Runge--Kutta (eSSPIFRK) methods perform on typical test cases compared to explicit, implicit-explicit (IMEX),
and exponential time differencing (ETD) methods. We also compare the linear stability properties of these methods.

In Section \ref{sec:conclusions} we conclude that  the newly developed SSP theory for integrating factor 
Runge--Kutta methods provides a provable bound on the allowable time-step   (which is often sharp in practice)
for preservation of the nonlinear non-inner-product stability properties. Moreover, the
newly developed methods demonstrate a  significantly larger allowable SSP step-size than standard methods 
including the exponential time-differencing methods of  Cox and Matthews \cite{CoxMatthews2002}, 
 SSP IMEX methods \cite{SSPIMEX}, standard explicit SSP Runge--Kutta methods \cite{SSPbook2011}, 
as well as the Runge--Kutta methods of Kinnmark and Grey \cite{KG1984}.

\bigskip

\noindent{\bf Note:} The following acronyms and notations are used in this work: 

\smallskip
\noindent
\begin{tabular}{|ll|} \hline
IFRK & integrating factor Runge--Kutta method.\\
SSP & strong stability preserving. \\
eSSPRK & explicit SSP Runge--Kutta method. \\
eSSPRK$^+$ & explicit SSP Runge--Kutta  method with non-decreasing abscissas. \\
eSSPKG & explicit SSP  Kinnmark and Gray method. \\
eSSPRK$^+$ & explicit SSP  Kinnmark and Gray method with  \\
& non-decreasing abscissas. \\
eSSPIFRK & explicit SSP  integrating factor Runge--Kutta method. \\
(s,p) & number of stages $s$ and order $p$. \\
IMEX & implicit-explicit additive method. \\
ETD & exponential time-differencing methods.\\
\hline
\end{tabular}

\section{A review of explicit SSP Runge--Kutta methods} \label{sec:background}
SSP Runge--Kutta methods guarantee the strong stability (in any norm, semi-norm, or convex functional)
of the numerical solution of any ODE  provided {\em only} that the forward Euler condition \eqref{FEstrongstability}  
is satisfied under a time step restriction  \eqref{FEcond}.
This  requirement  leads to severe restrictions on the allowable order of SSP methods,
and the allowable time step $\Dt \le \sspcoef \DtFE.$ 
These methods have been extensively studied, e.g., in 
\cite{ferracina2004, ferracina2005,ferracina2008,SSPbook2011,gottliebshu1998,
gottlieb2001, higueras2004a,higueras2005a, hundsdorfer2003, ketcheson2008, ketcheson2009,
ketcheson2009a,  ketcheson2011, KubatkoKetcheson, ruuth2001}.
In this section, we review some popular
and efficient explicit SSP Runge--Kutta methods, and present the SSP coefficients of optimized methods
of up to ten stages and fourth order. 


In the original papers on SSP time-stepping methods (there called TVD time-stepping)
\cite{shu1988b, shu1988},  the authors presented the first explicit SSP Runge--Kutta methods. 
These methods were  second and third order with SSP coefficient $\sspcoef=1$ ($\ceff=\frac{1}{2}$ and $\ceff=\frac{1}{3}$, respectively),
and were proven optimal \cite{gottliebshu1998}. We use the notation eSSPRK(s,p) to denote an explicit SSP Runge--Kutta method with
$s$ stages and of order $p$.

\noindent{\bf eSSPRK(2,2):}
\begin{eqnarray}
u^{(1)} & = &u^{n} + \Dt F(u^{n}) \nonumber \\
u^{n+1} & = &\frac{1}{2} u^{n} + \frac{1}{2} \left( u^{(1)} +  \Dt F(u^{(1)}) \right),
\end{eqnarray}
\noindent{\bf eSSPRK(3,3):}
\begin{eqnarray} \label{SSPRK33}
     u^{(1)} &= & u^n + \dt F(u^n) \nonumber \\
     u^{(2)} &= & \frac{3}{4} u^n + \frac{1}{4} \left(u^{(1)} + \dt F(u^{(1)})\right)  \nonumber \\
     u^{n+1} & = & \frac{1}{3} u^n + \frac{2}{3} \left(u^{(2)} +  \dt F(u^{(2)})\right).
\end{eqnarray}
Method \eqref{SSPRK33} has been extensively used and is known as the Shu-Osher method.
   \smallskip

No four stage fourth order explicit Runge--Kutta methods exist with a positive SSP coefficient
 \cite{gottliebshu1998,ruuth2001}. However, fourth order methods with more than four stages
   ($s>p$) do exist.

   \noindent{\bf eSSPRK(5,4):}
Found by Spiteri and Ruuth \cite{SpiteriRuuth2002} 
   \begin{eqnarray*}
u^{(1)} & = & u^n +  0.391752226571890 \dt F(u^n) \\ 
u^{(2)} & = &  0.444370493651235 u^n +  0.555629506348765 u^{(1)} 
+ 0.368410593050371 \dt F(u^{(1)}) \\ 
u^{(3)} & = &  0.620101851488403 u^n +  0.379898148511597 u^{(2)} 
 + 0.251891774271694  \dt F(u^{(2)}) \\ 
u^{(4)} & = &  0.178079954393132 u^n + 0.821920045606868 u^{(3)} 
+  0.544974750228521 \dt F(u^{(3)})\\ 
u^{n+1} & = &    0.517231671970585 u^{(2)} 
 +  0.096059710526147 u^{(3)} +  0.063692468666290 \dt F(u^{(3)}) \\ 
& & +  0.386708617503268 u^{(4)} +   0.226007483236906 \dt F(u^{(4)}) \, ,
\end{eqnarray*}
has $\sspcoef=1.508$ ($\ceff=0.302$).

   \noindent{\bf eSSPRK(10,4):} Found by
Ketcheson  \cite{ketcheson2008},  has a low-storage formulation:
\begin{eqnarray*}
u^{(1)} & = & u^n + \frac{1}{6} \dt F(u^n) \\ 
u^{(i+1)} & = & u^{(i)} + \frac{1}{6} \dt F(u^{(i)}) \; \; \;  i=1,2, 3\\ 
u^{(5)} & = & \frac{3}{5} u^n +   \frac{2}{5} \left(u^{(4)} +  \frac{1}{6} \dt F(u^{(4)})\right) \\ 
u^{(i+1)} & = & u^{(i)} + \frac{1}{6} \dt F(u^{(i)}) \; \; \; i=5,6,7,8 \\ 
u^{n+1} & = &   \frac{1}{25} u^{n} +  \frac{9}{25} \left(u^{(4)} + \frac{1}{6}  \dt F(u^{(4)}) \right)+  \frac{3}{5} \left(u^{(9)}
+ \frac{1}{6}  \dt F(u^{(9)})\right)  \, ,
\end{eqnarray*}
has  $\sspcoef=6$ ($\ceff=0.6$).

 \begin{table}
\centering
\setlength\tabcolsep{4pt}
\begin{minipage}{0.48\textwidth}
\centering
\begin{tabular}{|c|lll|} \hline
  \diagbox{s}{p}  &  2 & 3 & 4 \\
 \hline
    1  &  -           &  -           &   -\\   
    2  &  1.0000 &  -           &   -\\    
    3  &  2.0000 &  1.0000 &   -\\
    4  &  3.0000 &  2.0000 &   -\\
    5  &  4.0000 &  2.6506 &   1.5082\\
    6  &  5.0000 &  3.5184 &   2.2945\\
    7  &  6.0000 &  4.2879 &   3.3209\\
    8  &  7.0000 &  5.1071 &   4.1459\\
   9  &  8.0000 &  6.0000 &   4.9142\\
   10 &  9.0000 &  6.7853 &   6.0000\\
\hline
\end{tabular}
\caption{SSP coefficients of the optimized eSSPRK(s,p) methods \cite{SSPbook2011}.}
\label{tab:SSPcoef} 
\end{minipage}%
\hfill
\begin{minipage}{0.48\textwidth}
\centering
\begin{tabular}{|c|lll|} \hline
  \diagbox{s}{p}  &  2 & 3 & 4 \\
 \hline
    1  &   -           &   -           &   -\\   
    2  &   0.5000 &   -           &   -\\
    3  &   0.6667 &   0.3333 &   -\\
    4  &   0.7500 &   0.5000 &   -\\
    5  &   0.8000 &   0.5301 &   0.3016\\
    6  &   0.8333 &   0.5864 &   0.3824\\
    7  &   0.8571 &   0.6126 &   0.4744\\
    8  &   0.8750 &   0.6384 &   0.5182\\
    9  &   0.8889 &   0.6667 &   0.5460\\
   10 &   0.9000 &   0.6785 &   0.6000\\
\hline
\end{tabular}
\caption{Effective SSP coefficients of the optimized eSSPRK(s,p) methods \cite{SSPbook2011}.}
 \label{tab:effSSPcoef} 
\end{minipage}
\end{table}

In Table \ref{tab:SSPcoef} we present the SSP coefficients of optimized explicit SSP Runge--Kutta methods of up to $s=10$ stages and
order $p=4$, and in  Table \ref{tab:effSSPcoef}  the corresponding effective SSP coefficients.
Unfortunately, no methods of order $p \geq 5$ with positive SSP coefficients can exist \cite{kraaijevanger1991,ruuth2001}.

\section{Explicit SSP Runge--Kutta  schemes for use with  integrating factor methods} \label{sec:SSPIF} 
We consider a problem of the form
\begin{eqnarray}
u_t = Lu + N(u)  
\end{eqnarray}
with a linear constant coefficient component $L u $ and a nonlinear component $N(u)$.
The case we are interested in is when some strong stability condition is known for the forward Euler step of the 
nonlinear component $N(u)$
\begin{eqnarray}  \label{nonlinearFEcond}
\| u^n + \dt N(u^n) \| \leq \|u^n\| \qquad \mbox{for} \qquad \dt \leq \DtFE
\end{eqnarray}
while taking a forward Euler step using the linear component $Lu$ results in the  strong stability condition 
\begin{eqnarray} \label{linearFEcond}
\| u^n + \dt L u^n \| \leq \|u^n\| \qquad \mbox{for} \qquad \dt \leq \tDtFE
\end{eqnarray}
where $ \tDtFE <<  \DtFE$. 
In such cases, stepping forward using an explicit SSP Runge--Kutta method, or even an implicit or an  implicit-explicit (IMEX)
SSP Runge--Kutta method will result in severe constraints on the allowable time-step \cite{lenferink1991,ketcheson2009,ferracina2008,SSPbook2011,SSPIMEX}.

An alternative methodology  that may alleviate the restriction on the allowable time-step involves solving the linear part exactly
using an integrating factor approach 
\begin{eqnarray*}
e^{-L t } u_t - e^{-L t }  Lu = e^{-L t } N(u)  \\
\left( e^{-L t } u \right)_t = e^{-L t } N(u).
\end{eqnarray*}
A transformation of variables $w = e^{-L t } u $ gives the ODE system
\begin{eqnarray} \label{IF_ODE}
w_t  = e^{-L t } N(e^{L t } w)   = G(w),
\end{eqnarray}
which can then be evolved forward in time using, for example, an explicit Runge--Kutta method of the form \eqref{rkSO}.
For each stage $u^{(i)}$, which corresponds to the solution at time $t_i = t^n + c_i \dt$ (where each $c_i$ is the abscissa of the method
at the $i$th stage), 
the  corresponding integrating factor Runge--Kutta method becomes 
\[
e^{-L t_i} u^{(i)}   =   \sum_{j=0}^{i-1} \left( \aij e^{-L t_j}  u^{(j)} + \dt \bij e^{-L t_j}  N(u^{(j)}) \right),  \]
or
\begin{eqnarray*}
u^{(i)}   &=&    \sum_{j=0}^{i-1} \left( \aij e^{L (t_i-t_j) }  u^{(j)} + \dt \bij e^{L ( t_i-t_j)}  N(u^{(j)}) \right) \\
 &=&  \sum_{j=0}^{i-1} \left( \aij e^{L (c_i-c_j) \dt }  u^{(j)} + \dt \bij e^{L ( c_i-c_j)\dt}  N(u^{(j)}) \right) . 
 \end{eqnarray*}

\noindent In the following results we establish the SSP properties of this approach.

\begin{thm} \label{thm:exp_FEcond}
If  a  linear operator $L$  satisfies  \eqref{linearFEcond} for some value of $\tDtFE > 0 $,
then 
\begin{equation} \label{EXPcondition}
\| e^{\tau L} u^n \|  \leq \| u^n \|   \; \; \; \; \forall \; \tau \geq 0 . 
\end{equation}
\end{thm}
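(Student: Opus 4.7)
My plan is to exploit the classical identity $e^{\tau L} = \lim_{n\to\infty}\bigl(I + \tfrac{\tau}{n} L\bigr)^n$ and reduce the statement to $n$ successive applications of the forward Euler bound \eqref{linearFEcond}.

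First, fix $\tau \geq 0$ and choose an integer $n$ large enough that $\tau/n \leq \tDtFE$. Define the auxiliary sequence $v^{(0)} = u^n$ and $v^{(k+1)} = v^{(k)} + \tfrac{\tau}{n} L v^{(k)}$ for $k = 0, \dots, n-1$. Each step is a forward Euler update for the linear operator $L$ with stepsize $\tau/n \leq \tDtFE$, so hypothesis \eqref{linearFEcond} gives $\|v^{(k+1)}\| \leq \|v^{(k)}\|$. Iterating yields
\begin{equation*}
\Bigl\| \bigl(I + \tfrac{\tau}{n} L\bigr)^n u^n \Bigr\| \;=\; \|v^{(n)}\| \;\leq\; \|u^n\|.
\end{equation*}

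Next, I would pass to the limit $n\to\infty$. Since $L$ is a (bounded) linear operator on the finite-dimensional state space, the matrix-exponential identity $\bigl(I + \tfrac{\tau}{n} L\bigr)^n u^n \to e^{\tau L} u^n$ holds in any vector norm on the ambient space. Continuity of the convex functional $\|\cdot\|$ with respect to this norm then gives $\| e^{\tau L} u^n\| \leq \|u^n\|$, completing the proof. The continuity is automatic when $\|\cdot\|$ is a norm or seminorm; for a more general convex functional (as admitted by the SSP framework described earlier), continuity from coercivity or from convexity on a finite-dimensional space should be used, or one may simply work within the norm for which \eqref{linearFEcond} is stated.

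The only place where care is needed is the limit step: one should verify that the inequality survives passage to the limit for the class of $\|\cdot\|$ under consideration. This is routine for any norm or seminorm, and is the sole technical point — the rest of the argument is just a telescoping application of \eqref{linearFEcond}. No constraint on $\tau$ arises beyond $\tau \geq 0$, because the bound on the stepsize is satisfied for all sufficiently large $n$.
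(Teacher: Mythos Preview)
Your argument is correct and close in spirit to the paper's, but the mechanism is slightly different. The paper does not pass to a limit of products; instead it uses the Poisson-weighted expansion
\[
e^{z} \;=\; e^{-r}\sum_{j=0}^{\infty}\frac{r^{j}}{j!}\Bigl(1+\tfrac{z}{r}\Bigr)^{j},
\]
which exhibits $e^{\tau L}u^{n}$ directly as an infinite convex combination of iterated forward Euler steps with stepsize $\tau/r$, and then applies the convexity of $\|\cdot\|$ termwise. Since $r\ge 0$ is arbitrary, the constraint $\tau\le r\,\tDtFE$ imposes no restriction on $\tau$. Your route via $\bigl(I+\tfrac{\tau}{n}L\bigr)^{n}\to e^{\tau L}$ is more elementary and avoids the series identity, but it makes the continuity requirement on $\|\cdot\|$ explicit at the limit step; the paper's infinite convex combination hides essentially the same issue (Jensen for countable sums also needs lower semicontinuity). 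Either way, both proofs boil down to the same telescoping use of \eqref{linearFEcond} on substeps of size at most $\tDtFE$.
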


\begin{proof}
The Taylor series expansion of $e^z$ can be written as
\[e^z = \sum_{j=0}^\infty \gamma_j(r)  \left(1+ \frac{z}{r}  \right)^j  \; \; \; 
\mbox{where} \; \; \gamma_j = \frac{r^j}{j!}  e^{-r}  
\]
where the coefficients $\gamma_j$  are clearly nonnegative for all values of $r \geq 0$.
These coefficients  sum to one because  
\[  \sum_{j=0}^\infty \gamma_j =  \sum_{j=0}^\infty \frac{r^j}{j!}   e^{-r}   = 
e^{-r}    \sum_{j=0}^\infty \frac{r^j}{j!} =
e^{-r} e^r = 1. \]
Using this  we can show that $e^{\tau L}  u^n$ can be written as a convex combination of forward Euler steps with a modified time-step
$ \frac{\tau}{r} $,
so that
\begin{eqnarray*} \label{expFEcond}
\| e^{\tau L} u^n \| & = & \left\| \sum_{j=0}^\infty \gamma_j(r)  \left(1+ \frac{\tau}{r} L \right)^j u^n \right\| \\
& \leq & \sum_{j=0}^\infty \gamma_j(r)   \left\| \left(1+ \frac{\tau}{r} L \right)^j u^n \right\| \\
& \leq & \sum_{j=0}^\infty \gamma_j(r)   \left\|  u^n \right\| 
 \leq  \left\| u^n \right\| \; \; \mbox{for any $0 \leq  \tau  \leq r  \tDtFE$}.
\end{eqnarray*}
As this is true for any value of $r \geq 0$, we have
\[\| e^{\tau L} u^n \|  \leq \| u^n \|   \; \; \; \; \forall \tau \geq 0 . \]
Note that a negative value of $\tau$ is not allowed here. 
\end{proof}

\begin{rmk} The thm above deals with the case that 
\eqref{linearFEcond} is satisfied for some value of $\tDtFE > 0 $. However,
requiring $L$ to satisfy only condition  \eqref{EXPcondition}
 is sufficient for  the integrating factor 
Runge--Kutta method to be SSP. In the following results, therefore,
we only require the condition \eqref{EXPcondition}, which is 
a weaker condition than \eqref{linearFEcond}. 
\end{rmk}

\begin{cor}
Given a  linear operator $L$ that satisfies  \eqref{EXPcondition}
and a (possibly nonlinear) operator $N(u)$ that satisfies \eqref{nonlinearFEcond}  
for some value of ${\Delta t}_{FE} > 0 $, we have 
\begin{equation}
\| e^{\tau L} ( u^n  + \dt N (u^n )) \|  \leq \| u^n \|   \; \; \; \; \forall \dt \leq \DtFE,  \; \; \; \mbox{provided that} \; \; \tau \geq 0. 
\end{equation}
\end{cor}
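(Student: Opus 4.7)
The plan is to decompose the left-hand side into a forward Euler step in $N$ followed by an application of the exponential propagator, and then bound each factor in turn using the hypotheses already at hand. Setting $v := u^n + \dt N(u^n)$, the quantity whose norm we must estimate is simply $e^{\tau L} v$, so the problem reduces to chaining together two contractivity bounds: one for the nonlinear forward Euler step (valid for $\dt \leq \DtFE$) and one for the exponential (valid for $\tau \geq 0$).

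First I would apply the forward Euler condition \eqref{nonlinearFEcond} to get $\|v\| = \|u^n + \dt N(u^n)\| \leq \|u^n\|$ whenever $\dt \leq \DtFE$. Next I would apply hypothesis \eqref{EXPcondition} with the role of $u^n$ played by $v$, which gives $\|e^{\tau L} v\| \leq \|v\|$ for any $\tau \geq 0$. Composing these two inequalities yields $\|e^{\tau L}(u^n + \dt N(u^n))\| \leq \|u^n\|$ under the stated restrictions, which is exactly the corollary.

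There is no real obstacle here; the result is essentially a one-line composition of the theorem and the nonlinear forward Euler hypothesis. The only point worth emphasizing is that the sign restriction $\tau \geq 0$ is indispensable, since \eqref{EXPcondition} was established only for nonnegative arguments. This is the same constraint that will force the non-decreasing abscissa condition in the subsequent integrating factor Runge--Kutta analysis, where each stage update contains factors of the form $e^{L(c_i - c_j)\dt}$ and we must therefore have $c_i \geq c_j$ for every $j < i$ in order to legitimately invoke the present corollary at each stage.
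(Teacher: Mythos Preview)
Your proof is correct and follows essentially the same approach as the paper: both arguments introduce the intermediate vector $v = u^n + \dt N(u^n)$ (the paper calls it $y^{(1)}$), bound it by $\|u^n\|$ via \eqref{nonlinearFEcond}, and then apply \eqref{EXPcondition} to $e^{\tau L} v$. The only cosmetic difference is the order in which the two inequalities are written down.
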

\begin{proof}
Separate the term $e^{\tau L} ( u^n  + \dt N(u^n) )$ to two steps:
\begin{eqnarray*}
y^{(1)} & = & u^n  + \dt N( u^n) \\
y^{(2)} & = &  e^{\tau L}  y^{(1)}
\end{eqnarray*}
Clearly, from \eqref{EXPcondition} we have 
\[ \| y^{(2)} \| = \|  e^{\tau L}  y^{(1)} \|  \leq \|   y^{(1)} \| \]
for any $\tau \geq 0$. Now, from  \eqref{nonlinearFEcond}  we also have 
\[ \| y^{(1)} \|  = \| u^n  + \dt N(u^n) \| \leq \|u^n\| \; \; \; \forall \dt \leq \DtFE .\]
Putting these two together we obtain the desired result.
\end{proof}

The following thm describes the conditions under which an integrating factor Runge--Kutta method
is strong stability preserving.
\begin{thm} \label{thm:SSPIF}
Given a  linear operator $L$ that satisfies   \eqref{EXPcondition}
and a (possibly nonlinear) operator $N(u)$ that satisfies \eqref{nonlinearFEcond}  
for some value of ${\Delta t}_{FE} > 0 $, and a Runge--Kutta integrating factor method of the form
\begin{eqnarray} \label{rkIFSO}
u^{(0)} & =  & u^n, \nonumber \\
u^{(i)} & = & \sum_{j=0}^{i-1} e^{L (c_i-c_j) \dt }  \left( \aij u^{(j)} + \dt \bij N(u^{(j)}) \right), \; \; \; \; i=1, . . ., s\\
 u^{n+1} & = & u^{(s)}  \nonumber
\end{eqnarray}
where $0=c_1 \leq c_2 \leq  . . . \leq c_s$, then $u^{n+1}$ obtained from \eqref{rkIFSO} satisfies
\begin{equation}
\|u^{n+1}\| \leq \|u^n\| \; \; \; \forall \dt \leq \sspcoef \DtFE.
\end{equation}
\end{thm}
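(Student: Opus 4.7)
The plan is to prove this by induction on the stage index $i$, showing that $\|u^{(i)}\| \leq \|u^n\|$ for all $i=0,1,\ldots,s$. The base case $i=0$ is immediate since $u^{(0)}=u^n$. For the inductive step, I would apply the triangle inequality to the Shu--Osher--like representation \eqref{rkIFSO} to obtain
\begin{equation*}
\|u^{(i)}\| \leq \sum_{j=0}^{i-1} \left\| e^{L(c_i-c_j)\dt} \left( \aij u^{(j)} + \dt\bij N(u^{(j)}) \right) \right\|,
\end{equation*}
using that $\aij, \bij \geq 0$ (a standing assumption for eSSPRK methods in Shu--Osher form, which underlies the definition of $\sspcoef$).

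The key step, and the one that justifies the non-decreasing abscissa hypothesis, is to peel off the exponential factor. Because $c_i \geq c_j$ for every $j < i$, the exponent $\tau := (c_i-c_j)\dt$ is nonnegative, so the preceding corollary (or directly \eqref{EXPcondition}) gives $\|e^{\tau L} v\| \leq \|v\|$ applied to $v = \aij u^{(j)} + \dt \bij N(u^{(j)})$. Thus
\begin{equation*}
\|u^{(i)}\| \leq \sum_{j=0}^{i-1} \left\| \aij u^{(j)} + \dt\bij N(u^{(j)}) \right\| = \sum_{j=0}^{i-1} \aij \left\| u^{(j)} + \dt \frac{\bij}{\aij} N(u^{(j)}) \right\|,
\end{equation*}
with the convention that terms with $\aij=0$ (and hence $\bij=0$) are dropped.

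Next I would invoke the forward Euler condition \eqref{nonlinearFEcond} on each interior term. Provided $\dt \cdot \bij/\aij \leq \DtFE$, i.e., $\dt \leq (\aij/\bij)\DtFE$, each summand is bounded by $\aij \|u^{(j)}\|$. Taking the minimum over all $i,j$ yields exactly the hypothesis $\dt \leq \sspcoef \DtFE$ with $\sspcoef = \min_{i,j} \aij/\bij$. Combining with the inductive hypothesis $\|u^{(j)}\| \leq \|u^n\|$ and the consistency relation $\sum_{j=0}^{i-1} \aij = 1$ yields $\|u^{(i)}\| \leq \|u^n\|$. Setting $i=s$ gives the conclusion.

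The main obstacle, or at least the step requiring the most care, is justifying the contractive bound on the exponential factor $e^{L(c_i-c_j)\dt}$: this is precisely where the assumption $c_1 \leq c_2 \leq \cdots \leq c_s$ enters, since \eqref{EXPcondition} holds only for nonnegative arguments (negative $\tau$ would correspond to backward-in-time propagation under a dissipative operator, and no contraction is available). Everything else is a direct transcription of the classical SSP convex-combination argument for explicit Runge--Kutta methods, with the exponential factors harmlessly absorbed at each stage.
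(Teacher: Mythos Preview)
Your proposal is correct and follows essentially the same approach as the paper's proof: both bound each stage by the triangle inequality, use the non-decreasing abscissa condition to apply \eqref{EXPcondition} (the paper packages this via Corollary~1), and then invoke the forward Euler condition \eqref{nonlinearFEcond} together with $\sum_j \aij = 1$ to close the induction. Your version spells out the inductive structure and the consistency relation more explicitly, but the argument is the same.
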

\begin{proof}
We observe that  for each stage of \eqref{rkIFSO}
\begin{eqnarray*}
\| u^{(i)} \|& = & \left\| \sum_{j=0}^{i-1} e^{L (c_i-c_j) \dt }  \left( \aij u^{(j)} + \dt \bij N(u^{(j)}) \right) \right\| \\
& \leq  & \sum_{j=0}^{i-1} \left\|  e^{L (c_i-c_j) \dt }  \left( \aij u^{(j)} + \dt \bij N(u^{(j)}) \right)\right\| \\
& \leq  &  \sum_{j=0}^{i-1} \aij \left\|  e^{L (c_i-c_j) \dt }  \left( u^{(j)} + \dt \frac{\bij}{\aij} N(u^{(j)}) \right) \right\| 
\end{eqnarray*}
where the last inequality follows from Corollary 1, as long as $c_i - c_j \geq 0$ and 
$\dt \frac{\bij}{\aij}  \leq \DtFE$. This establishes the result of the thm.
Furthermore, this proof ensures that these methods have internal stage strong stability as well,  i.e.
$\| u^{(i+1)} \| \leq \| u^{(i)} \| $ at each stage $i$ of the time-stepping, under the same time-step restriction.
\end{proof}

\begin{rmk}
It is possible to preserve the strong stability property even with decreasing abscissas,
provided that whenever the term $c_i-c_j$ is negative, the operator $L$ is replaced by an operator
$\tilde{L}$ that satisfies the condition \[\| e^{-\tau \tilde{L}} u^n \|  \leq \| u^n \|   \; \; \; \; \forall \; \tau \geq 0 . \]
For hyperbolic partial differential equations, this is accomplished by using the spatial discretization that is stable for 
the downwinded analog of the operator.
This approach is similar to the one employed  in the classical SSP literature, where negative coefficients $\bij$ may be allowed if the 
corresponding operator is replaced by a downwinded operator \cite{gottliebshu1998,gottlieb2001, ketcheson2011}.
\end{rmk}

\bigskip

\noindent{\bf Example:} To demonstrate the practical importance of this thm, consider the partial differential equation
 \begin{align} 
u_t + 10 u_x +  \left( \frac{1}{2} u^2 \right)_x & = 0 \hspace{.75in}
    u(0,x)  =
\begin{cases}
1, & \text{if } 0 \leq x \leq 1/2 \\
0, & \text{if } x>1/2 \nonumber
\end{cases}
\end{align}
on the domain $[0,1]$ with periodic boundary conditions. We discretize the spatial grid with $400$ points and use
a first-order upwind difference to semi-discretize the linear term $L u \approx -10 u_x$,  and a fifth order WENO finite difference for the  
nonlinear terms $N(u) \approx - \left( \frac{1}{2} u^2 \right)_x $.  
For the time discretization, we use the integrating factor method based on the 
explicit eSSPRK(3,3) Shu-Osher method \eqref{SSPRK33}:
\begin{eqnarray} \label{SOIF}
     u^{(1)} &= & e^{L\dt}u^n + e^{L\dt} \dt N(u^n) \nonumber \\
     u^{(2)} &= & \frac{3}{4} e^{\frac{1}{2}L\dt} u^n + \frac{1}{4} e^{-\frac{1}{2}L\dt} u^{(1)} + \frac{1}{4} e^{-\frac{1}{2}L\dt} \dt N(u^{(1)})  \nonumber \\
     u^{n+1} & = & \frac{1}{3}e^{L\dt}  u^n +\frac{2}{3} e^{\frac{1}{2}L\dt}  u^{(2)} +  \frac{2}{3} e^{\frac{1}{2}L\dt} \dt N(u^{(2)}). 
\end{eqnarray}
The appearance of exponentials with negative exponents is due to the fact that \eqref{SSPRK33} has decreasing abscissas.
For comparison we also use a IFRK(3,3) method based on an explicit SSP Runge--Kutta method with non-decreasing abscissas,
denoted eSSPRK$^+(3,3)$ (which will be presented in \eqref{eSSPRK33+})
\begin{eqnarray}     \label{SSPIFRK33}
     u^{(1)} &= & \frac{1}{2}e^{\frac{2}{3} \dt L} u^n +  \frac{1}{2} e^{\frac{2}{3} \dt L}  \left(u^{n}+\frac{4}{3}\dt N(u^n)\right) \nonumber \\
     u^{(2)} &= & \frac{2}{3}  e^{\frac{2}{3} \dt L} u^n + \frac{1}{3} \left(u^{(1)} + \frac{4}{3} \dt N(u^{(1)})\right) \nonumber \\
     u^{n+1} & = & \frac{59}{128} e^{ \dt L} u^n + \frac{15}{128} e^{ \dt L}  \left(u^n +  \frac{4}{3} \dt N(u^n)\right)  \\
     && + \frac{27}{64} e^{\frac{1}{3} \dt L} \left(u^{(2)} 			+ \frac{4}{3} \dt N(u^{(2)})\right). \nonumber
\end{eqnarray}
The eSSPRK$^+(3,3)$ method this integrating factor is based on has SSP coefficient $\sspcoef = \frac{3}{4}$, 
which is smaller than the $\sspcoef=1$ of the Shu-Osher method \eqref{SSPRK33},
due to the restriction on the non-decreasing abscissas. thm \ref{thm:SSPIF} above tells us that the IFRK
method \eqref{SSPIFRK33}
will be SSP while the IFRK method  \eqref{SOIF} based on the Shu-Osher method \eqref{SSPRK33}  will not be.

We  selected different values of $\dt$ and used each one to evolve the solution  25 time steps using 
the IFRK methods \eqref{SOIF} and \eqref{SSPIFRK33}.
We  calculated the maximal rise in total variation over each stage for  25 time steps. 
In Figure \ref{fig:motivating} we show the $log_{10}$ of the maximal rise in total variation vs. the value of $\lambda = \frac{\dt}{\dx}$ of the evolution 
using  \eqref{SOIF} (in blue) and  using \eqref{SSPIFRK33} (in red). We observe that the results from 
method  \eqref{SOIF} have a large maximal rise in total variation even for very small values of $\lambda$,
while the results from \eqref{SSPIFRK33}  maintain a small maximal rise in total variation up to $\lambda \approx 0.8$.

\begin{figure}
\centering
 \label{fig:motivating} \includegraphics[scale=.375]{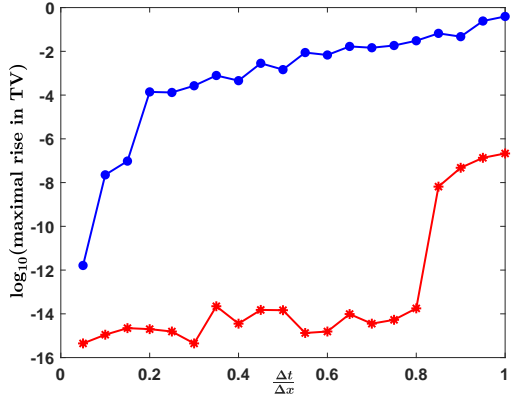} 
    \caption{ Total variation behavior of the evolution over 25 time-steps using the 
     integrating factor methods 
    \eqref{SOIF} (blue dots) and \eqref{SSPIFRK33} (red stars).
}
\end{figure}

This example clearly illustrates that basing an IFRK method on an explicit SSP Runge--Kutta method 
is not enough to ensure the preservation of a strong stability property. In this case, we {\em must} use the non-decreasing abscissa 
condition in thm \eqref{thm:SSPIF} to ensure that the strong stability property is preserved.

\section{Formulating the optimization problem} \label{sec:optimization}

Our aim is to find eSSPRK(s,p) methods to evolve an equation of the form \eqref{ode}
which have non-decreasing abscissas and 
 largest SSP coefficient $\sspcoef$. 
We denote these methods eSSPRK$^+$(s,p).
These methods can then be used to produce an integrating factor method \eqref{rkIFSO}
that has a guarantee of nonlinear stability,
as we showed in Section \ref{sec:SSPIF} above.  
Following the approach developed  by Ketcheson \cite{ketcheson2008},
we formulate an optimization problem
 similar to the one used for explicit SSP Runge-Kutta methods \cite{SSPbook2011} but with one additional constraint.

Although the SSP coefficient $\sspcoef$ is most easily seen in Shu-Osher form, constructing the optimization problem is easier when the method is written in Butcher form:
\begin{eqnarray} \label{butcher}
u^{(i)} & = & u^{n} + \dt \sum_{j=1}^{i-1} a_{ij} F(u^{(j)}) \; \; \; \; (1 \leq i \leq s) 
\\ \nonumber
u^{n+1} & = & u^n + \dt \sum_{j=1}^s b_j  F(u^{(j)}).
\end{eqnarray}
We can put all the $a_{ij}$ values into a matrix $\mA$ and all the $b_{j}$ into a vector $\vb$. Then we define the vector of abscissas $\vc = \mA \ve$, where 
$\ve$ is the vector of ones of the appropriate length.
We rewrite \eqref{butcher}  in vector form
\[Y = \ve u^n + \Delta t \mS  F(Y) \] where  $\mS$ is the square matrix defined by:
 \[ \mS =  \left( \begin{array}{ll} \mA & 0 \\ \vb^T & 0  \\ \end{array} \right) . \]
We add   $r \mS Y$ to each side to obtain
\begin{eqnarray*}
\left( I +r \mS   \right)  Y = 
\ve u^n  + r \mS \left( Y + \frac{\Delta t}{r}  F(Y) \right)   \quad   \Rightarrow  \quad
 Y  =  \mR (\ve u^n) + \mP \left( Y + \frac{\Delta t}{r}  F(Y) \right) ,
\end{eqnarray*}
for  $\mR  =  \left( I +r \mS   \right)^{-1}$ and  $\mP  =  r \mR \mS . $
Clearly, if $\mR \ve$ and $\mP$  have all non-negative components, 
we have a  convex combination of forward Euler steps. Therefore, 
 the strong stability property \eqref{monotonicity} will be preserved
under the modified time-step restriction $\dt \leq  r \DtFE  $.

As discussed in \cite{SSPbook2011}, the goal is to maximize the value of $r$ subject to the constraints
\begin{subequations} \label{optimization}
\begin{align}
 \left( I +r \mS  \right)^{-1} \ve   \geq 0    \label{eq:constraints1}  \\
r  \left( I +r \mS  \right)^{-1} \mS \geq 0   \label{eq:constraints2}  \\
 \tau_k(\mA, \vb) = 0 \; \; \; \mbox{for} \; \; \;  k=1, . . ., P,   \label{eq:constraints3} 
 \end{align}
 Where in   the inequalities are all component wise and $\tau_k$ in \eqref{eq:constraints3} are the order conditions.
 
In addition to the constraints \eqref{eq:constraints1} --  \eqref{eq:constraints3}, we must also add  the condition 
that  the abscissas are non-decreasing
\begin{equation} \label{eq:constraint4}
c_1 \leq c_2 \leq \dots \leq c_s \leq 1.
\end{equation}
\end{subequations}
Solving this optimization problem will generate an  explicit SSP  Runge--Kutta method
with coefficients $\mA$ and $\vb$  such that the abscissas are non-decreasing, with a SSP coefficient $\sspcoef = r$.

\subsection{Order conditions} \label{orderconditions}
The equality constraints \eqref{eq:constraints3} for the optimization problem above come from the order conditions, 
which were derived in \cite{Butcher}.
Below are the order conditions for methods up to fourth order. For first order a method must satisfy the consistency 
condition:
\begin{align*}
\vb^T \ve & = 1.
\end{align*}
In addition to this condition second order methods must also satisfy:
\begin{align*} 
\vb^T \vc & =  \frac{1}{2}.
\end{align*}
There are two more order conditions required to obtain third order:
\begin{align*} 
\vb^T \left(\vc \cdot \vc \right) & = \frac{1}{3}, &
\vb^T\mA\vc=\frac{1}{6}.
\end{align*}
For fourth order four additional conditions must be satisfied:
\begin{align*} 
\vb^T \left(\vc \cdot \vc \cdot \vc \right)  = \frac{1}{4}, \; \; \;
\vb^T \left(\vc \cdot \mA \vc \right) = \frac{1}{8}, \; \; \;  
\vb^T\mA \left(\vc \cdot \vc\right)  = \frac{1}{12}, \; \; \; 
\vb^T\mA^2 \vc  = \frac{1}{24}.\; \; \; 
\end{align*}
Note that $(\va \cdot \vb)$ denotes element-wise multiplication.
We do not present the order conditions past fourth order since there are no explicit SSP Runge--Kutta methods greater than fourth order.

\section{Optimal and optimized methods\label{sec:optimal}}
The optimization problem above was implemented in {\sc Matlab} (as in \cite{ketcheson2008, ketcheson2009a, ketcheson2009}), 
and used to find optimized eSSPRK$^+$ methods of up to ten stages and fourth order.  These methods have non-decreasing abscissas and so
can be used as a basis for explicit SSP integrating factor Runge--Kutta (eSSPIFRK) methods.

\begin{figure}[t]
\includegraphics[scale=.3275]{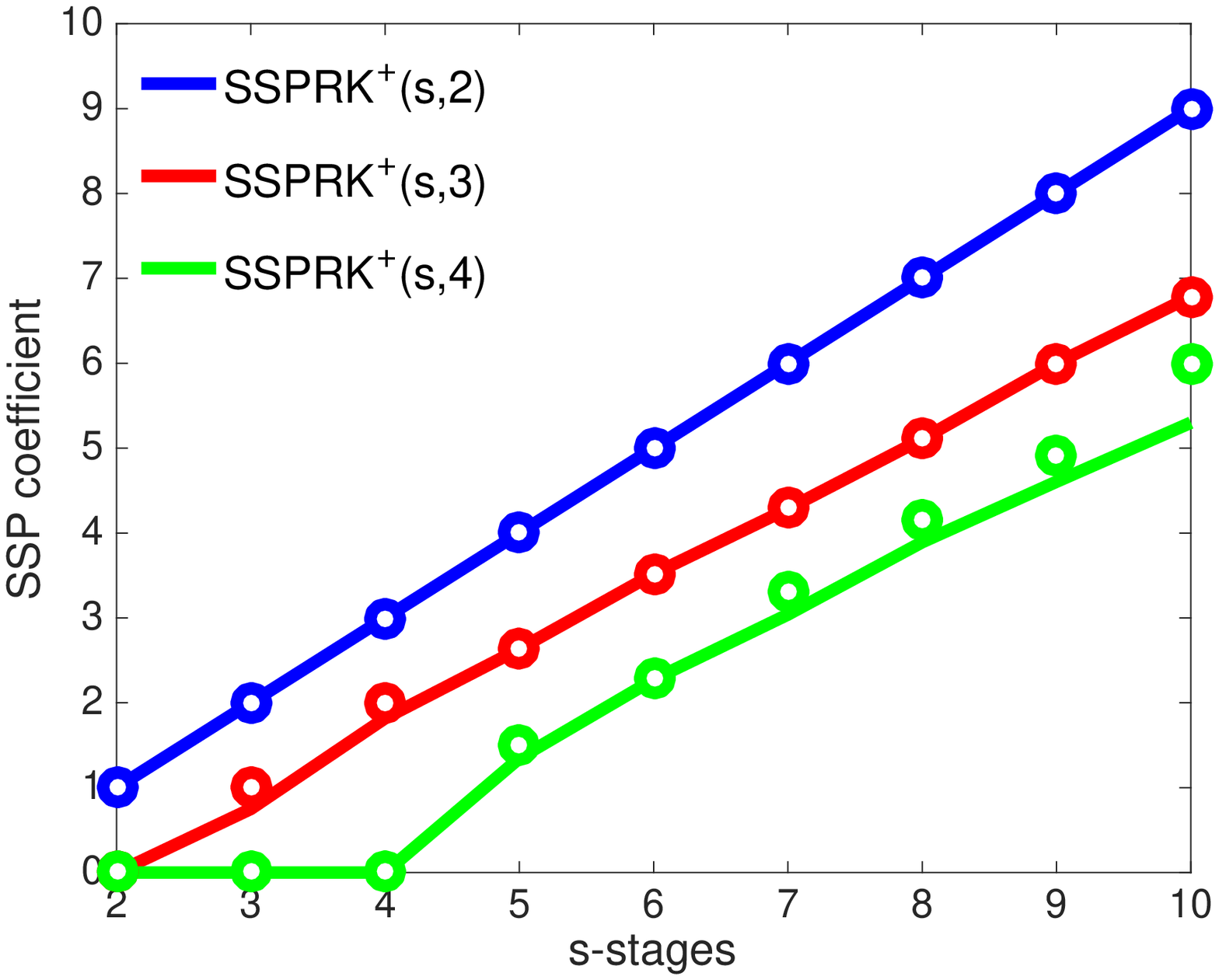} 
\includegraphics[scale=.3275]{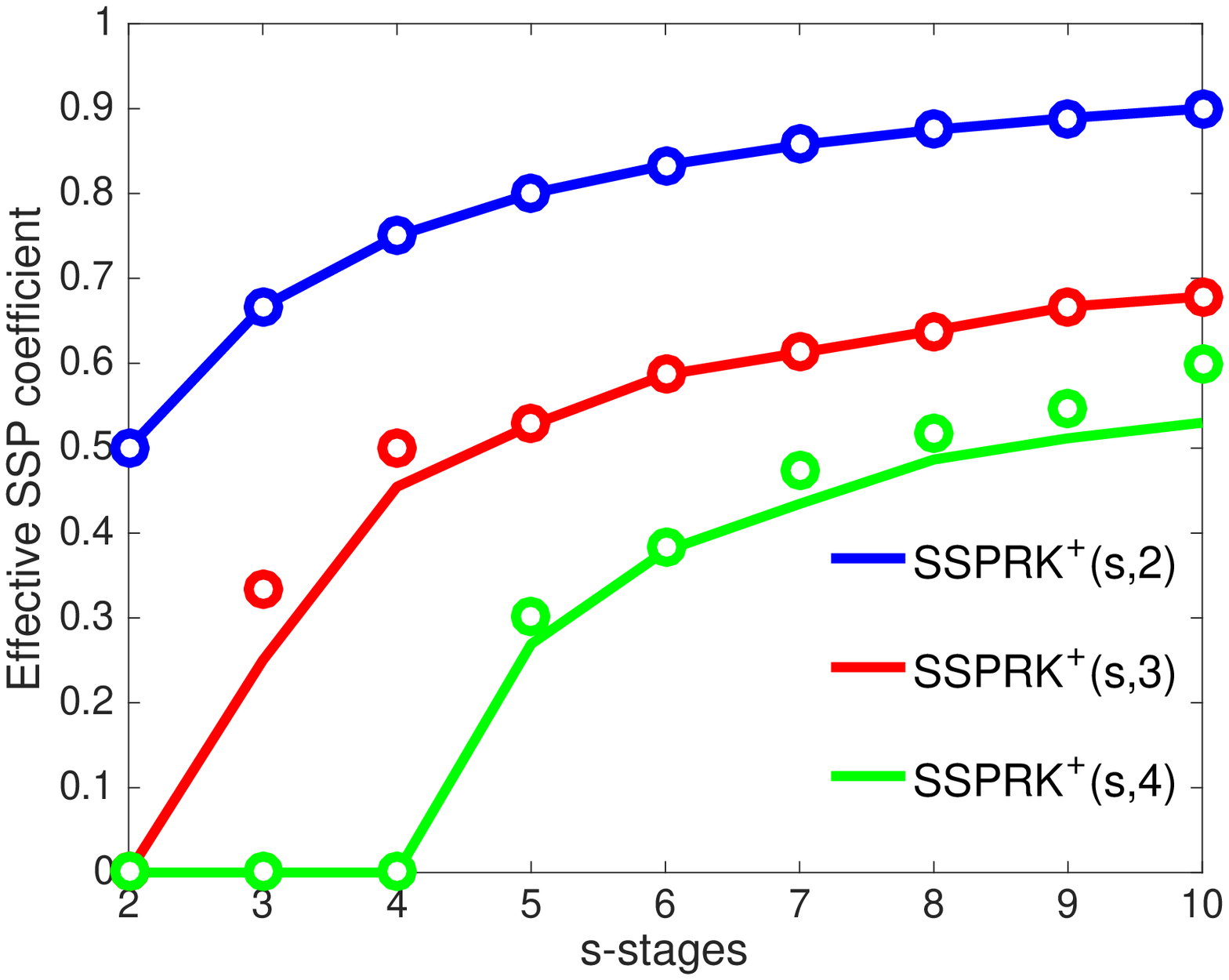}\\ \vspace{-.25in}
\caption{SSP coefficient (left) and effective SSP coefficient (right) for orders 
 for $p=2$ (blue), $p=3$ (red) and $p=4$ (green) methods.
 The circles indicate the SSP coefficient of the optimized  eSSPRK methods
 while the lines are the SSP coefficients of the optimized  eSSPRK$^+$ methods.}
 \label{fig:SSPcoeff}
\end{figure}

  The SSP coefficients and effective SSP coefficients of the optimized eSSPRK$^+$ methods
  are listed in Tables \ref{tab:SSPcoefIF} and \ref{tab:effSSPcoefIF}.
 The SSP coefficients of this family of methods are  compared to those of the optimized  eSSPRK methods with no constraint
 on the abscissas in Figure
 \ref{fig:SSPcoeff}, where the circles indicate the SSP coefficient of the optimized  explicit SSPRK methods 
  while the lines are the SSP coefficients of the optimized  explicit SSPRK$^+$ methods.

We observe that the optimal second order methods we found have the same SSP coefficients
as the previously known SSP Runge--Kutta methods. This  is not surprising as the abscissas of those optimal methods 
are non-decreasing, so our optimization routine found the previously known optimal methods. These eSSPRK$^+$(s,2) 
methods have SSP coefficient $\sspcoef=s-1$ and  effective SSP coefficient $\ceff=\frac{s-1}{s}$.
 In Section \ref{sec:order2} we give the coefficients for  these methods in both Shu-Osher form and 
 Butcher form and show that the abscissas are indeed non-decreasing.

 In the third order case, the additional requirement that the abscissas be non-decreasing results in smaller SSP coefficients
 than the typical explicit  SSP Runge--Kutta methods. For example, the optimal eSSPRK(3,3) Shu-Osher method 
 \eqref{SSPRK33} has SSP coefficient $\sspcoef = 1$ while the optimal   eSSPRK$^+$(3,3)  method 
  has  SSP coefficient $\sspcoef= \frac{3}{4}$, as we will prove in Section  \ref{sec:order3}. This loss in the 
 SSP coefficient is also evident for the eSSPRK$^+$(4,3) method  ($\sspcoef=\frac{20}{11}$)
 compared to the eSSPRK(4,3) method ($\sspcoef=2$). However, as we add more stages the 
impact of the additional requirement of non-decreasing abscissas becomes negligible and the SSP coefficients
of the eSSPRK$^+$(s,3) methods are very close to  those of the standard eSSPRK(s,3) methods,
as seen in Figure  \ref{fig:SSPcoeff}.
  
  \begin{table}[t!]
\centering
\setlength\tabcolsep{4pt}
\begin{minipage}{0.48\textwidth}
\centering
\begin{tabular}{|c|lll|} \hline
  \diagbox{s}{p}  &  2 & 3 & 4 \\
 \hline
    1  &  -           &  -           &   -\\   
    2  &  1.0000 &  -           &   -\\    
    3  &  2.0000 &  0.7500 &   -\\
    4  &  3.0000 &  1.8182 &   -\\
5  &  4.0000 &  2.6351 &   1.3466\\
    6  &  5.0000 &  3.5184 &   2.2738\\
    7  &  6.0000 &  4.2857 &   3.0404\\
    8  &  7.0000 &  5.1071 &   3.8926\\
   9  &  8.0000 &  6.0000 &   4.6048\\
   10 &  9.0000 &  6.7853 &   5.2997\\
\hline
\end{tabular}
\caption{SSP coefficients of the optimized eSSPRK$^+$(s,p) methods.}
\label{tab:SSPcoefIF} 
\end{minipage}%
\hfill
\begin{minipage}{0.48\textwidth}
\centering
\begin{tabular}{|c|lll|} \hline
  \diagbox{s}{p}  &  2 & 3 & 4 \\
 \hline
    1  &   -           &   -           &   -\\   
    2  &   0.5000 &   -           &   -\\
    3  &   0.6667 &   0.2500 &   -\\
    4  &   0.7500 &   0.4545 &   -\\
    5  &   0.8000 &   0.5270 &   0.2693\\
  6  &   0.8333 &   0.5864 &   0.3790\\
    7  &   0.8571 &   0.6122 &   0.4343\\
    8  &   0.8750 &   0.6384 &   0.4866\\
    9  &   0.8889 &   0.6667 &   0.5116\\
   10 &   0.9000 &   0.6785 &   0.5300\\
\hline
\end{tabular}
\caption{Effective SSP coefficients of the optimized eSSPRK$^+$(s,p) methods..}
 \label{tab:effSSPcoefIF} 
\end{minipage}
\end{table}

In the fourth order case, the SSP coefficient of the optimized eSSPRK$^+$ 
are certainly smaller than those of the corresponding eSSPRK methods. 
In fact, this does not significantly improve as we increase the 
number of stages. Notably, the optimal eSSPRK(10,4) method  found by 
Ketcheson \cite{ketcheson2008} has an SSP coefficient of $\sspcoef=6$ while the corresponding 
eSSPRK$^+$(10,4) method
has SSP coefficient $\sspcoef=5.3$, a reduction of over 10\%. 
An exception to this is the  optimized eSSPRK$^+$(6,4) method 
in which the non-descreasing abscissa requirement results in only a $1\%$ reduction of the SSP coefficient
compared to the eSSPRK(6,4).


\subsection{Sub-optimal explicit SSP  Kinnmark and Gray Runge--Kutta methods with non-decreasing abscissas} \label{sec:KGmethods}
In \cite{KG1984}, Kinnmark and Gray presented a set of Runge--Kutta methods for linear problems. More precisely, they presented the linear
stability polynomials for these methods. These methods were designed for use with problems that require a linear stability polynomials that include a
large area of the imaginary axis. It is interesting to investigate what types of SSP coefficients the methods described in \cite{KG1984} can have. 
To do so, we modified our code that finds SSP Runge--Kutta methods of $s$ stages and order $p$
 to include the linear stability polynomials of Kinnmark and Gray and used this code to find optimized SSP methods 
with $(s,p)= (3,3), (5,3), (6,4)$. 
We call these eSSPKG(s,p) methods and present their SSP coefficients and a comparison to the corresponding
eSSPRK(s,p) methods in Table  \ref{KGSSPcoef}. 
All explicit SSP Runge--Kutta methods of $s=p=3$ have the same stability polynomial, so that the 
Kinnmark and Gray methods eSSPKG(3,3) have the same linear stability region as eSSPRK(3,3).
Although the SSP coefficients of the other SSP Kinnmark and Gray methods are smaller than those 
of the typical SSP Runge--Kutta methods, they may be used if the linear stability regions
of Kinnmark and Gray are of interest. Note that the approach of optimizing an SSP method for a given linear stability region was originally done by \cite{KubatkoKetcheson}.  For comparison, we provide the linear stability regions in Figure \ref{KGstability}. We observed that, as expected,
the Kinnmark Gray methods have larger imaginary axis stability, but smaller overall regions.

Next, we added the requirement that the abscissas are non-decreasing to find optimized SSPKG+(s,p) methods
for $(s,p)= (3,3), (5,3), (6,4)$, and compare their SSP coefficients to those of the SSPRK+(s,p), also in Table \ref{KGSSPcoef}.  
These methods are suitable for use
with Lawson-type integrating factor methods. In Figure \ref{KGstability} we show the linear stability regions of the 
$(s,p)=(5,3)$ and $(s,p)=(6,4)$ methods, as well.
We test the SSPKG methods as well as their use within the integrating factor approach in Example 3,
where we see that their  allowable time-step for preserving the TVD properties of a simple benchmark method  are generally smaller than those
of the SSPRK+ methods and the SSPRKIF methods.

\begin{figure}[t]
\centering
  \includegraphics[scale=.3]{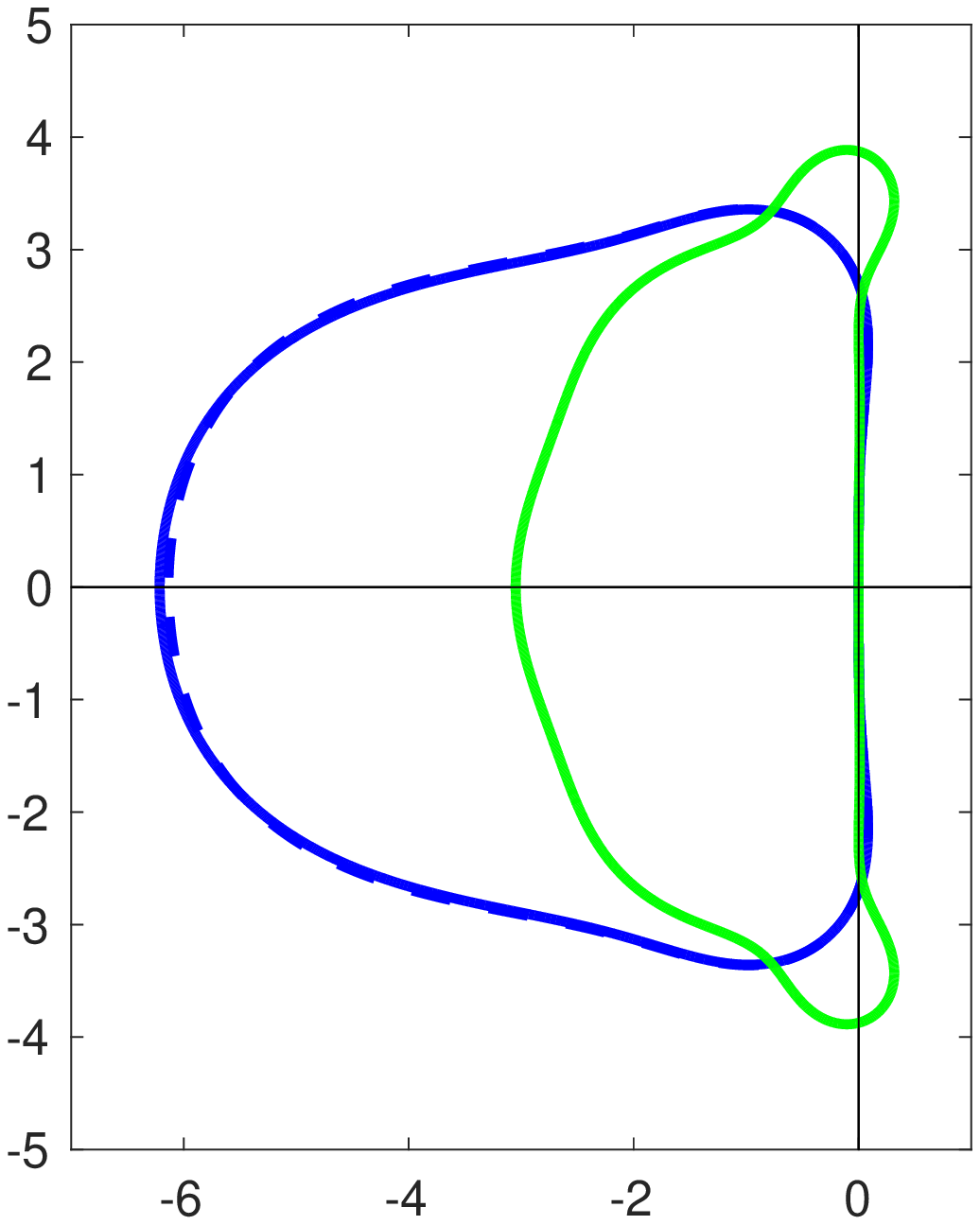}  \hspace{-.7in}
  \includegraphics[scale=.3]{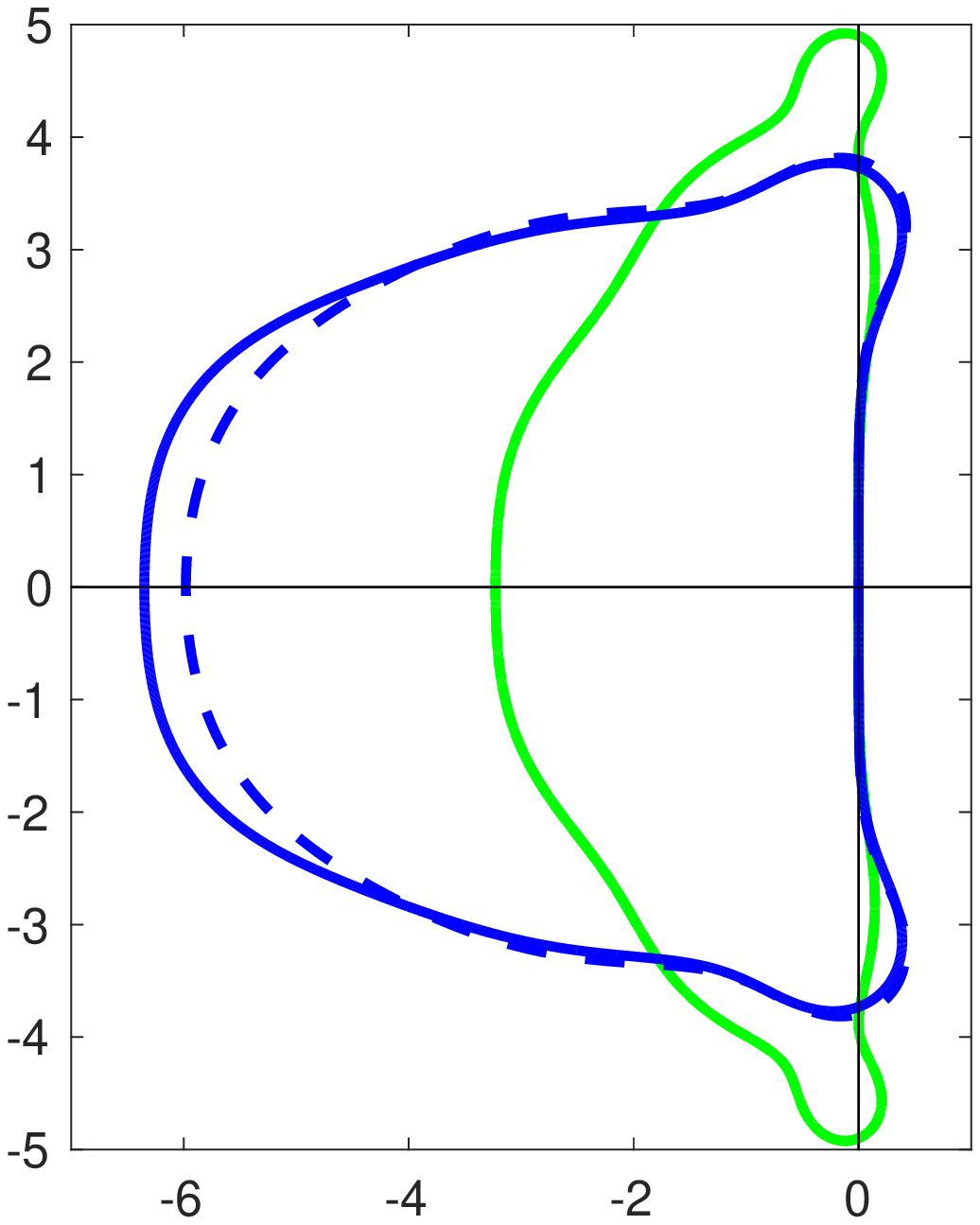} 
    \caption{ Linear stability region of the Kinnmark and Gray methods (in green) compared to the eSSPRK method (blue solid line)
    and the eSSPRK+ method (blue dashed line). Left: method with $(s,p)=(5,3)$; Right: method with $(s,p)=(6,4)$.
     \label{KGstability}}
\end{figure}

\bigskip

\begin{table}[h]
\begin{center}
\begin{tabular}{|ll|ll|} \hline
method & $\sspcoef$ & method & $\sspcoef$ \\ \hline
eSSPRK(5,3) &   2.6506  		&   eSSPRK+(5,3) & 2.6351 \\
{\bf eSSPKG(5,3)} &    1.0000 		&   {\bf  eSSPKG+(5,3)}  & 0.8750 \\
eSSPRK(6,4) &    2.2945		&   eSSPRK+(6,4) & 2.2738 \\
{\bf eSSPKG(6,4) } &    0.9904  	&   {\bf eSSPKG+(6,4) }&0.7851  \\ \hline
\end{tabular}
\caption{SSP coefficients of the optimized SSP Runge--Kutta methods designed for the Kinnmark and Gray stability regions
(eSSPKG methods, in bold) compared to typical SSP Runge--Kutta methods (eSSPRK methods), 
with (right column) and without (left column) non-decending abscissas.} \label{KGSSPcoef}
\end{center}
\end{table}

\newpage

\subsection{Optimal second order explicit SSP  Runge--Kutta methods with non-decreasing abscissas} \label{sec:order2}
We mentioned above that the optimal  eSSPRK(s,2) methods have non-decreasing coefficients.
In this section we review  these methods, first presented in  \cite{gottlieb2001},
and show that the abscissas are in fact increasing. These methods
can be written in Shu-Osher form (where $u^{(0)}=u^{n}$) :

\begin{equation*}
\begin{split}
u^{(i)}&= u^{(i-1)} +\frac{\Delta t}{s-1}F (u^{(i-1)}), \quad i=1,\dots,s-1\\
u^{(s)}&=\frac{1}{s}u^{0}+\frac{s-1}{s}\left(
u^{(s-1)} +\frac{\Delta t}{s-1}F(u^{(s-1)}) \right) \\
u^{n+1}&=u^{(s)}.
\end{split}
\end{equation*}

%
In Butcher form, this becomes
\[
\mA
=
\begin{bmatrix}
    0 & 0 & 0 & \dots  & 0 & 0\\
    \frac{1}{s-1} & 0 & 0 & \dots  & 0 & 0\\
    \frac{1}{s-1} &  \frac{1}{s-1} & 0 & \dots  & 0 & 0\\
    \vdots & \ddots & \ddots & \ddots & \vdots &\vdots\\
    \frac{1}{s-1} & \frac{1}{s-1} & \frac{1}{s-1} &  \frac{1}{s-1}  & 0 & 0 \\
    \frac{1}{s-1} & \frac{1}{s-1} & \frac{1}{s-1} & \frac{1}{s-1} &\frac{1}{s-1} & 0
\end{bmatrix} , \; \; 
\vb^T
=
\begin{bmatrix}
    \frac{1}{s} \\ \frac{1}{s} \\ \frac{1}{s} \\ \vdots  \\ \frac{1}{s} \\ \frac{1}{s}\\
\end{bmatrix} \; ,\]
with  abscissas
\[ \vc^T=  \left[0,  \frac{1}{s-1},   \frac{2}{s-1}, \dots  \frac{s-2}{s-1},   1 \right] .\]
 Clearly, these optimal explicit SSP Runge--Kutta methods have increasing abscissas, and are therefore
suitable use with an integrating factor approach to create eSSPIFRK methods.
 

\subsection{Optimized third order explicit SSP  Runge--Kutta methods with non-decreasing abscissas}  \label{sec:order3}

\begin{thm}
The eSSPRK$^+$(3,3) method given by 
\begin{eqnarray} \label{eSSPRK33+}
     u^{(1)} &= & \frac{1}{2} u^n+\frac{1}{2}\left(u^n +  \frac{4}{3}\dt F(u^n)\right) \nonumber \\
     u^{(2)} &= & \frac{2}{3} u^n + \frac{1}{3} \left( u^{(1)} + \frac{4}{3} \dt F(u^{(1)}) \right)  \nonumber \\
     u^{n+1} & = & \frac{59}{128} u^n + \frac{15}{128} \left(  u^{(1)} +  \frac{4}{3} \dt F(u^{(1)}) \right)+ \frac{27}{64} 
         \left(u^{(2)} + \frac{4}{3} \dt F(u^{(2)}) \right).
\end{eqnarray}
is strong stability preserving with SSP coefficient $\sspcoef=\frac{3}{4}$ and is optimal among all 
eSSPRK$^+$(3,3) methods.
\end{thm}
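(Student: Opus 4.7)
The plan is to verify the SSP coefficient of the stated method equals $3/4$ and then to prove the matching upper bound over the entire class of 3-stage, third-order explicit Runge--Kutta methods with non-decreasing abscissas.

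To establish $\mathcal{C} \geq 3/4$, I would read off the ratios directly from the Shu-Osher form \eqref{eSSPRK33+}. Every term with a nonzero $\beta_{ij}$ appears as $\alpha_{ij}\bigl(u^{(j)}+\tfrac{4}{3}\dt F(u^{(j)})\bigr)$, so the Shu-Osher ratio $\alpha_{ij}/\beta_{ij}$ equals $3/4$ uniformly, while the remaining terms are pure $\alpha u^{(j)}$ pieces with $\beta_{ij}=0$. The $\alpha$-weights sum to unity at each stage ($\tfrac12+\tfrac12=1$, $\tfrac23+\tfrac13=1$, and $\tfrac{59}{128}+\tfrac{15}{128}+\tfrac{27}{64}=1$), so the standard SSP characterization immediately gives $\mathcal{C} = 3/4$.

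For the upper bound, I would parametrize every 3-stage third-order explicit Runge--Kutta method by its abscissas $(c_2, c_3)$ with $c_1 = 0$. When $c_2 \neq c_3$, the four order conditions of Section \ref{orderconditions} uniquely determine
\[b_2 = \tfrac{3c_3-2}{6 c_2(c_3-c_2)}, \; b_3 = \tfrac{2-3c_2}{6 c_3(c_3-c_2)}, \; a_{32} = \tfrac{c_3(c_3-c_2)}{c_2(2-3c_2)}, \; a_{31} = c_3 - a_{32},\]
and the feasibility conditions $a_{31}, b_2, b_3 \geq 0$ restrict attention to $c_3 \geq 2/3$, $c_2 \leq 2/3$, and $c_3 \leq 3c_2(1-c_2)$. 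The key upper bound comes from the Shu-Osher decomposition of the final update: since $\beta_{31} = b_2 - \alpha_{32} a_{32} \geq 0$ forces $\alpha_{32}/\beta_{32} = \alpha_{32}/b_3 \leq b_2/(a_{32}b_3) = 6 c_2 b_2$ via the fourth order condition $b_3 a_{32} c_2 = 1/6$, one obtains $\mathcal{C} \leq (3c_3 - 2)/(c_3 - c_2)$. An analogous analysis of the Shu-Osher decomposition for $u^{(2)}$---after noting that the ``balanced'' alternative decomposition is ruled out because the discriminant $c_3^2 - 4 c_2 a_{32}$ is negative wherever $b_2 \geq 0$---yields $\mathcal{C} \leq a_{31}/(c_2 a_{32}) = (3c_2(1-c_2) - c_3)/(c_2(c_3 - c_2))$.

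Combining these two upper bounds with the constraint $c_2 \leq c_3$ turns $\mathcal{C} \geq 3/4$ into the joint requirement $3c_3 + c_2 \geq 8/3$ and $c_3 \leq 3c_2(4-3c_2)/(3c_2+4)$; equating the boundary expressions yields $(3c_2 - 2)^2 = 0$, so the only common solution in the feasible region is the single point $c_2 = c_3 = 2/3$ (where the two curves meet tangentially). On this boundary the second and third order conditions force $c_2 = c_3 = 2/3$ uniquely, leaving $a_{32} \in [\tfrac13, \tfrac23]$ as the one free parameter, and the two bounds simplify to $(2-3a_{32})/(2a_{32})$ and $3 - 1/a_{32}$, which coincide at the value $3/4$ exactly when $a_{32} = 4/9$. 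This recovers the method \eqref{eSSPRK33+} and completes the optimality proof. The hardest step is the case analysis for the stage-2 Shu-Osher ratios---verifying that the ``balanced'' decomposition is infeasible throughout the feasible $(c_2, c_3)$ region, so that only the boundary bound is active---and then showing that the combined stage-2 and stage-3 bounds pinch the optimum down to this single point.
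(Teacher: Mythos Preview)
Your proof follows essentially the same route as the paper's. Both establish $\mathcal{C}=3/4$ directly from the canonical Shu--Osher form, parametrize the generic $c_2<c_3$ family by the abscissas, and extract the two key constraints coming from $\beta_{20}\ge0$ (which yields $\mathcal{C}\le a_{31}/(c_2 a_{32})$) and $\beta_{31}\ge0$ (which yields $\mathcal{C}\le b_2/(a_{32}b_3)$); these combine to $(3c_2-2)^2\le0$ and force $c_2=c_3=2/3$, which is then handled as a one-parameter degenerate family. The paper phrases this as a proof by contradiction (assume $\mathcal{C}>3/4$ and derive $(3c_2-2)^2<0$), whereas you pinch the feasible set down to a single point---a purely cosmetic difference.

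Two small remarks: first, the identity $b_3 a_{32}c_2=1/6$ you invoke is the third-order condition $\vb^T\mA\vc=1/6$, not a fourth-order one. Second, your detour about ruling out a ``balanced'' stage-2 decomposition via a discriminant is unnecessary: the bound $\mathcal{C}\le a_{31}/(c_2 a_{32})$ follows directly from $\alpha_{21}\ge \mathcal{C}\,\beta_{21}=\mathcal{C}\,a_{32}$ together with $\beta_{20}=a_{31}-\alpha_{21}c_2\ge0$, with no case analysis needed.
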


\begin{proof}
This method is given in its canonical Shu-Osher form. Clearly, we have a convex combination of forward Euler steps with
time-step $ \frac{4}{3} \dt $ and so this method is SSP with $\sspcoef=\frac{3}{4}$.

To show that this is optimal among all possible eSSPRK$^+$(3,3) methods, we  follow along the lines
of the proof in \cite{SSPbook2011}. We assume that $\sspcoef>\frac{3}{4}$, which means that
$ \frac{\alpha_{ij}}{\beta_{ij}}>\frac{3}{4}$ or  
\[ \alpha_{ij}>\frac{3}{4}\beta_{ij} \quad \mbox{for any} \quad  i,j, \]
and proceed with a proof by contradiction.

First, recall that we can transform between the Shu-Osher coefficients $\alpha,\beta$ and the Butcher array coefficients
$\mA$ and $\vb$ as follows:
\begin{equation*}
\begin{split}
a_{21}=& \beta_{10}, \; \; 
a_{31}= \beta_{20} + \alpha_{21}\beta_{10}, \; \; 
a_{32}= \beta_{21}\\
b_{1}=& \alpha_{32}\alpha_{21}\beta_{10} + \alpha_{31}\beta_{10} + \alpha_{32}\beta_{20} + \beta_{30}, \; \; 
b_{2}= \alpha_{32}\beta_{21} + \beta_{31}, \; \; 
b_{3}= \beta_{32}\\
c_1=&0, \; \; c_2 = a_{21}, \; \;  c_3 = a_{31}+ a_{32} .\\
\end{split}
\end{equation*}

Note that for the method to be SSP, the  coefficients must all be non-negative.
If the abscissas are non-decreasing, there are two possible cases: $c_2 = c_3 $ and $c_2 < c_3$.
We consider each of these cases separately. 

\noindent [{\bf Case (a)}] If the abscissas are equal, they must be $c_2 = c_3 = \frac{2}{3}$. 
The coefficients in this case satisfy
\[ a_{21} = \frac{2}{3},  \; \; a_{31} = \frac{2}{3} - \frac{1}{4 \omega} ,  \; \; a_{32} = \frac{1}{4 \omega}, \; \; 
b_1= \frac{1}{4} ,  \; \;  b_2= \frac{3}{4} - \omega ,  \; \;  b_3 = \omega  \]
for parameter $\omega$.
The assumption that $ \alpha_{ij}>\frac{3}{4}\beta_{ij} $ and the  non-negativity assumption on the coefficients results in
\[ b_{2}= \alpha_{32}\beta_{21} + \beta_{31} \geq \alpha_{32}\beta_{21} > \frac{3}{4}  \beta_{32}\beta_{21}  
= \frac{3}{4}  b_{3} a_{32} =  \frac{3}{16} 
\implies \omega < \frac{9}{16} . \]

On the other hand, 
\[ a_{31} = \beta_{20} + \alpha_{21}\beta_{10} \geq  \alpha_{21}\beta_{10} >  \frac{3}{4} \beta_{21} \beta_{10}  = \frac{3}{4} a_{32} a_{21} =
 \frac{3}{4} \frac{1}{4 \omega}  \frac{2}{3}= \frac{1}{8 \omega}  \]
so that 
\[ \frac{2}{3} - \frac{1}{4 \omega}  > \frac{1}{8\omega} 
\implies \omega > \frac{9}{16} ,\]
which contradicts the bound on $\omega$ above.

\noindent [{\bf Case (b)}] If the two abscissas are not equal, and we require non-decreasing abscissas, we must have
$ c_2 < c_3$.  In this case, the coefficients are given by a two parameter system, where the parameters are the abscissas $c_2$ and $c_3$.
\begin{eqnarray*} 
a_{21}= c_{2},  \; \; a_{31}= \frac{3c_{2}c_{3}(1-c_{2})-c_{3}^2}{c_{2}(2-3c_{2})} , \; \;
a_{32}= \frac{c_{3}(c_{3}-c_{2})}{c_{2}(2-3c_{2})} \\
 b_{1}= 1 + \frac{2-3(c_{2}+c_{3})}{6c_{2}c_{3}},
\; \; b_{2}= \frac{3c_{3}-2}{6c_{2}(c_{3}-c_{2})},
 \; \;  b_{3}= \frac{2-3c_{2}}{6c_{3}(c_{3}-c_{2})}.\\
\end{eqnarray*}

The requirement that 
$ c_2 < c_3$ and that both $b_2$ and $b_3$ are non-negative gives 
\begin{equation*}
\begin{split}
b_{2}=& \frac{3c_{3}-2}{6c_{2}(c_{3}-c_{2})}\geq0 \implies c_{3} \geq \frac{2}{3}\\
b_{3}=& \frac{2-3c_{2}}{6c_{3}(c_{3}-c_{2})}\geq0 \implies c_{2} \leq \frac{2}{3}.\\
\end{split}
\end{equation*}

Now begin with $a_{31}= \beta_{20} + \alpha_{21}\beta_{10} $, and recall that  $\alpha_{21}>\frac{3}{4}\beta_{21}$ and 
$\beta_{10}=a_{21}=c_{2}$ and $\beta_{21} = a_{32}$, so that
\[ a_{31} >  \frac{3}{4}\beta_{21}c_{2} = \frac{3}{4} a_{32} c_{2}
 \implies c_3-a_{32}> \frac{3}{4}a_{32}c_{2}
  \implies c_3> a_{32}\left(1+\frac{3}{4}c_{2}\right) \]
  which requires $ a_{32} < \frac{c_3}{\left(1+\frac{3}{4}c_{2}\right)}$.
  
  Using the definition of $a_{32}$, this means
\[ \frac{c_{3}(c_{3}-c_{2})}{c_{2}(2-3c_{2})}  <  \frac{c_3}{\left(1+\frac{3}{4}c_{2}\right)} 
\implies c_{3}< \frac{3c_{2}-\frac{9}{4}c_{2}^2}{\left(1+\frac{3}{4}c_{2}\right)} .\]

Next, we look use the fact that $\beta_{31}\geq0$ to obtain
\[ b_2 = \alpha_{32}\beta_{21} + \beta_{31} \geq  \alpha_{32}\beta_{21}  > \frac{3}{4} b_3 a_{32} 
\implies a_{32}  < \frac{4}{3} \frac{b_2}{b_3} = \frac{4}{3}\frac{c_3(3 c_{3}-2)}{c_2(2-3 c_{2})}.
\]
Now we use $a_{32}= \frac{c_{3}(c_{3}-c_{2})}{c_{2}(2-3c_{2})}$ to conclude that
\[  \frac{c_{3}(c_{3}-c_{2})}{c_{2}(2-3c_{2})}  <   \frac{4}{3}\frac{c_3(3c_{3}-2)}{c_2(2-3 c_{2})} 
 \implies   c_3  >   -\frac{1}{3} c_{2}+\frac{8}{9} .\]

We now have two statements that need to be simultaneously true
\[
c_{3}  <  \frac{3c_{2}-\frac{9}{4}c_{2}^2}{\left(1+\frac{3}{4}c_{2}\right)}  \; \; \mbox{and} \; \; 
c_3   >   -\frac{1}{3} c_{2}+\frac{8}{9} \]
which means that we must have
\[ - \frac{1}{3}c_{2}+\frac{8}{9} <\frac{3 c_{2}-\frac{9}{4} c_{2}^2}{\left(1+\frac{3}{4} c_{2}\right)}
\implies
(3 c_{2}-2)^2 <0 \]
however this is a contradiction because $(3 c_{2}-2)^2$ is always greater than or equal to zero. This means that our original assumption
was not correct, and that if $ c_2 < c_3$ we cannot have $\sspcoef >\frac{3}{4}$.
\end{proof}

\subsection{Recommended SSP Runge--Kutta methods for use with integrating factor methods}
The optimal second order methods eSSPRK$^+$(s,2) listed above have sparse Shu-Osher representations and a general formula.
However, for the optimized third and fourth order methods, we do not have a general formula. 
In this section we list a few of the optimized
third and fourth order methods. 
The coefficients of all the methods we found can be downloaded as {\tt .mat} files from our github repository
\cite{SSPIFgithub}.

\bigskip

\noindent{\bf eSSPRK$^+$(4,3) }
This method has rational coefficients and sparse Shu-Osher matrices: 
{
\begin{eqnarray*}
u^{(1)}&=& u^{n} + \frac{11}{20}\Delta t F(u^{n})\\
u^{(2)}&=&\frac{3}{8} u^{n} 
	     +\frac{5}{8}\left(u^{(1)} + \frac{11}{20}\Delta tF(u^{(1)})\right)\\
u^{(3)}&=&\frac{4}{9}u^{n} 
	   +\frac{5}{9}\left(u^{(2)} + \frac{11}{20}\Delta tF(u^{(2)})\right)\\
u^{n+1}&=&\frac{111}{1331}u^{n}+ \frac{260}{1331}\left(u^{n}
+\frac{11}{20}\Delta t F(u^{n})\right)+\frac{960}{1331}\left(u^{(3)}+\frac{11}{20}\Delta tF(u^{(3)})\right)
\end{eqnarray*} }
The abscissas are $c_1=0, c_2= \frac{11}{20}, c_3=c_4 = \frac{11}{16}$.
This method has $\sspcoef=\frac{20}{11}$.

\smallskip

When many stages are required for a high order computation, the amount of storage, particularly for large simulations, may
become prohibitive. Low storage methods are of great interest in such cases.
Low storage Runge--Kutta methods were considered in \cite{Williamson1980,KennedyCarpenterLewis2000,vanderHouwen1972}.
More recently,  Ketcheson \cite{ketcheson2008} developed many low-storage SSP methods and showed that some of the
most efficient methods in terms of the SSP coefficient are also efficient in terms of storage.
This method is low-storage in the sense of \cite{ketcheson2008}, as many of the storage registers can be overwritten
during the implementation, assuming one is willing to recompute $F(u^{i})$ when needed \cite{ketcheson2008, SSPbook2011}. 
Due to the structure of the Shu-Osher matrices, only two memory registers are required for this method, 
rather than the full $s+1=5$ that would be needed for a naive implementation.
\smallskip

\noindent{\bf eSSPRK$^+$(9,3)}
This method has $\sspcoef = 6$  and features 
rational coefficients and sparse Shu-Osher matrices:
{
\begin{equation*}
\begin{split}
u^{(0)} =& u^n \\
u^{(i)}=&u^{(i-1)} + \frac{1}{6}\Delta tF(u^{(i-1)}) \; \; \mbox{for}\;  i=1, . . ., 4 \\
u^{(5)}=&\frac{1}{5}u^{n}
	     +\frac{4}{5}\left(u^{(4)} + \frac{1}{6}\Delta t F(u^{(4)})\right)\\
u^{(6)}=&\frac{1}{4}\left(u^{n} + \frac{1}{6}\Delta t F(u^{n})\right)
	   +\frac{3}{4}\left(u^{(5)} + \frac{1}{6}\Delta t F(u^{(5)})\right)\\
u^{(7)}=&\frac{1}{3} u^{(2)} 
	     +\frac{2}{3}\left(u^{(6)} + \frac{1}{6}\Delta t F(u^{(6)})\right)\\
u^{(8)}=&u^{(7)} + \frac{1}{6}\Delta tF(u^{(7)})\\
u^{n+1}=&u^{(8)}+\frac{1}{6}\Delta tF(u^{(8)}) .
\end{split}
\end{equation*}}
The abscissas are $c_1=0, c_2=\frac{1}{6}, c_3=\frac{2}{6}, c_4 = \frac{3}{6},
c_5= c_6 =c_7 = c_8 = \frac{4}{6}, c_9 =\frac{5}{6}$, which simplifies the computation of the matrix exponential, 
as only one needs to be computed.

This method  is also efficient in terms of memory as it may
be implemented with  only three storage registers (assuming one is willing to compute $F(u^{i})$ twice).
Although this is not a low-storage method in the sense of \cite{ketcheson2008}, i.e., it requires more than two registers,  
we do not require the full $s+1=10$  storage registers naively needed for implementing this method, but only three.

\smallskip

\noindent For fourth order methods, we no longer have rational coefficients.

\noindent {\bf eSSPRK$^+$(5,4)}
This method has SSP coefficient 
$\sspcoef=r=1.346586417284006$, and  non-decreasing abscissas 
$ c_1 =  0, \quad
c_2 \approx     0.4549,  \quad
c_3 = c_4  \approx   0.5165, \quad
c_5 \approx  0.9903 $:

{\small
\begin{eqnarray*}
u^{(1)}&=&0.387392167970373 \;u^{n} + 0.612607832029627 \left( u^{n} + \frac{\Delta t}{r} F(u^{n}) \right) \\
u^{(2)}&=&0.568702484115635 \;u^{n} + 0.431297515884365  \left( u^{(1)} +  \frac{\Delta t}{r}  \Delta tF(u^{(1)}) \right) \\
\end{eqnarray*}
\begin{eqnarray*}
u^{(3)}&=&0.589791736452092 \;u^{n} + 0.410208263547908 \left( u^{(2)} +   \frac{\Delta t}{r}  F(u^{(2)}) \right) \\
u^{(4)}&=&0.213474206786188 \;u^{n} + 0.786525793213812   \left( u^{(3)} +  \frac{\Delta t}{r}  F(u^{(3)}) \right) \\
u^{n+1}&=&0.270147144537063 \;u^{n} + 
0.029337521506634 \left( u^{n} + \frac{\Delta t}{r} F(u^{n}) \right)  \\
&+ & 0.239419175840559 \left( u^{(1)} +  \frac{\Delta t}{r}  \Delta tF(u^{(1)}) \right)
+ 0.227000995504038 \left( u^{(3)} +  \frac{\Delta t}{r}  F(u^{(3)}) \right) \\
&+&   0.234095162611706 \left( u^{(4)} +  \frac{\Delta t}{r}  F(u^{(4)}) \right) .
\end{eqnarray*}}

\bigskip

\noindent{\bf eSSPRK$^+$(6,4)}
This method has SSP coefficient 
$\sspcoef=r=2.273802749301517$, and non-decreasing abscissas
$ c_1 =  0, \quad 
c_2 \approx 0.4398, \quad  
c_3 \approx  0.4515, \quad 
c_4 = c_5 \approx 0.5461, \quad 
c_6 \approx. 0.9859  $:

{\small
\begin{eqnarray*}
u^{(1)}&=&    u^{n} + \frac{\Delta t}{r} F(u^{n})  \\
u^{(2)}&=&   0.486695314011133    u^{n}  +  0.513304685988867 \;  \left( u^{(1)} +  \frac{\Delta t}{r}  \Delta tF(u^{(1)}) \right)\\
u^{(3)}&=&   0.387273961537322 \;u^{n} + 0.612726038462678 \; \left( u^{(2)} +   \frac{\Delta t}{r}  F(u^{(2)}) \right)\\
u^{(4)}&=&   0.419340376206590  \;u^n+  0.048271190433595 \;  \left( u^{n} + \frac{\Delta t}{r} F(u^{n}) \right) +  \\
&&  0.532388433359815  \left( u^{(3)} +  \frac{\Delta t}{r}  F(u^{(3)}) \right)\\
u^{(5)}&=&   u^{(4)} +  \frac{\Delta t}{r}  F(u^{(4)}) \\                 
u^{n+1}&=&   0.122021674306995 \;u^{n} +
   0.104714614292281 \;  \left( u^{(1)} +  \frac{\Delta t}{r}  \Delta tF(u^{(1)}) \right)+ \\
&&   0.316675962670361 \;  \left( u^{(2)} +  \frac{\Delta t}{r}  \Delta tF(u^{(2)}) \right)+
   0.057551178672633 \;  \left( u^{(4)} +  \frac{\Delta t}{r}  \Delta tF(u^{(4)}) \right)+ \\
&&   0.399036570057730 \;  \left( u^{(5)} +  \frac{\Delta t}{r}  \Delta tF(u^{(5)}) \right).
\end{eqnarray*}}


\newpage

\section{Numerical Results\label{sec:test}}
In this section, we test the explicit SSP integrating factor  Runge--Kutta (eSSPIFRK) methods based on eSSPRK$^+$ methods 
 presented  in Section \ref{sec:optimal}  
for convergence and SSP properties. First, we test these methods for convergence on  a nonlinear system of ODEs  to confirm 
that the new methods exhibit the desired orders.  Next, we study the behavior of these methods in terms of their allowable 
time-step on  linear and nonlinear problems with spatial discretizations that  are provably total variation diminishing (TVD). 
%
%
 While the utility of SSP methods goes well beyond its initial purpose of preserving the TVD
properties of the spatial discretization coupled with forward Euler, the simple TVD test in this 
section  has been used extensively because it tends to demonstrate the sharpness of the 
SSP time-step.

\begin{rmk}
The cost of computation of the matrix exponential is a major factor that will determine the efficiency of these methods 
in practice. There are several approaches that can be taken here. The first and simplest approach is that the approximation
of the matrix exponential be done by evolving  $u' = Lu$ (where $u$ is a matrix and $u^0 = I$) 
numerically up to $t=\dt$ using an explicit SSP RK method with a sufficiently small stepsize
as in \cite{dubious} (a more recent approach combines this idea with a scale and square method \cite{AlMohyHigham}). 
This is not inefficient when performed only once per simulation, which is all that is required when $L$ 
is a constant coefficient operator. However, when storage is a consideration, and matrix-free approaches are desired,
there are a number of other efficient  approaches that have been proposed, and are under active consideration by 
several research groups  working on exponential time differencing methods.
Many of these methods produce the action of a matrix exponential at a cost less than the cost of an implicit solve.
The reader is referred to the  work of \cite{AlMohyHigham}, the EXPOKIT software \cite{Sidje}, 
the {\em phipm} adaptive method in \cite{NiesenWright}, 
and the KIOPS method of Tokman \cite{GaudreaultRainwaterTokman}.
Although this active area of research is outside the scope of our paper, it is of great interest as it
will be necessary bringing the SSPIFRK methods presented in this paper into practical use.
\end{rmk}

\subsection{Example 1: Convergence study}  
To verify the order of convergence of these methods  we test their performance on a nonlinear system of ODEs
\begin{eqnarray*}
u_1' &=& u_2 \\ 
u_2' &=&  (-u_1 + (1-u_1^2) u_2)
\end{eqnarray*}
known as the  van der Pol problem.
We split the problem in two different ways into a linear part $Lu$ and a nonlinear part $N(u)$ given by:
\[\mbox{(a)} \; \; \; L =  \left( \begin{array}{rr}
0 & 1 \\
-1 & 1\\
\end{array} \right), \; \; \; 
N(\vu) = \left(\begin{array}{c}
0 \\ -u_1^2 u_2
\end{array} \right)
 \]
 and
 \[\mbox{(b)} \; \; \; L =  \left( \begin{array}{rr}
0 & 1 \\
-1 & 0\\
\end{array} \right), \; \; \; 
N(\vu) = \left(\begin{array}{c}
0 \\ (1-u_1^2) u_2
\end{array} \right)
 \]
We use  initial conditions $\vu_0 = (2;  0)$, and  run  the problem to 
 final time $T_{final} =  0.50$, with $\Delta t = 0.02, 0.04, 0.06, 0.08, 0.10$.
 The exact solution (for error calculation)  was calculated by  MATLAB's 
ODE45 routine with tolerances set to {\tt AbsTol=}$10^{-15}$
and {\tt  RelTol=}$10^{-15}$.
For each splitting, we tested all the methods represented in Table \ref{tab:SSPcoefIF} above 
and calculated the slopes of the orders  by MATLAB's {\tt polyfit} function;
we found that they all exhibit the expected order of convergence.
Due to space constraints, we show only a representative selection 
 in Figure \ref{fig:VDPconv}. 
\begin{figure}[t]
\begin{center}
\includegraphics[scale=.325]{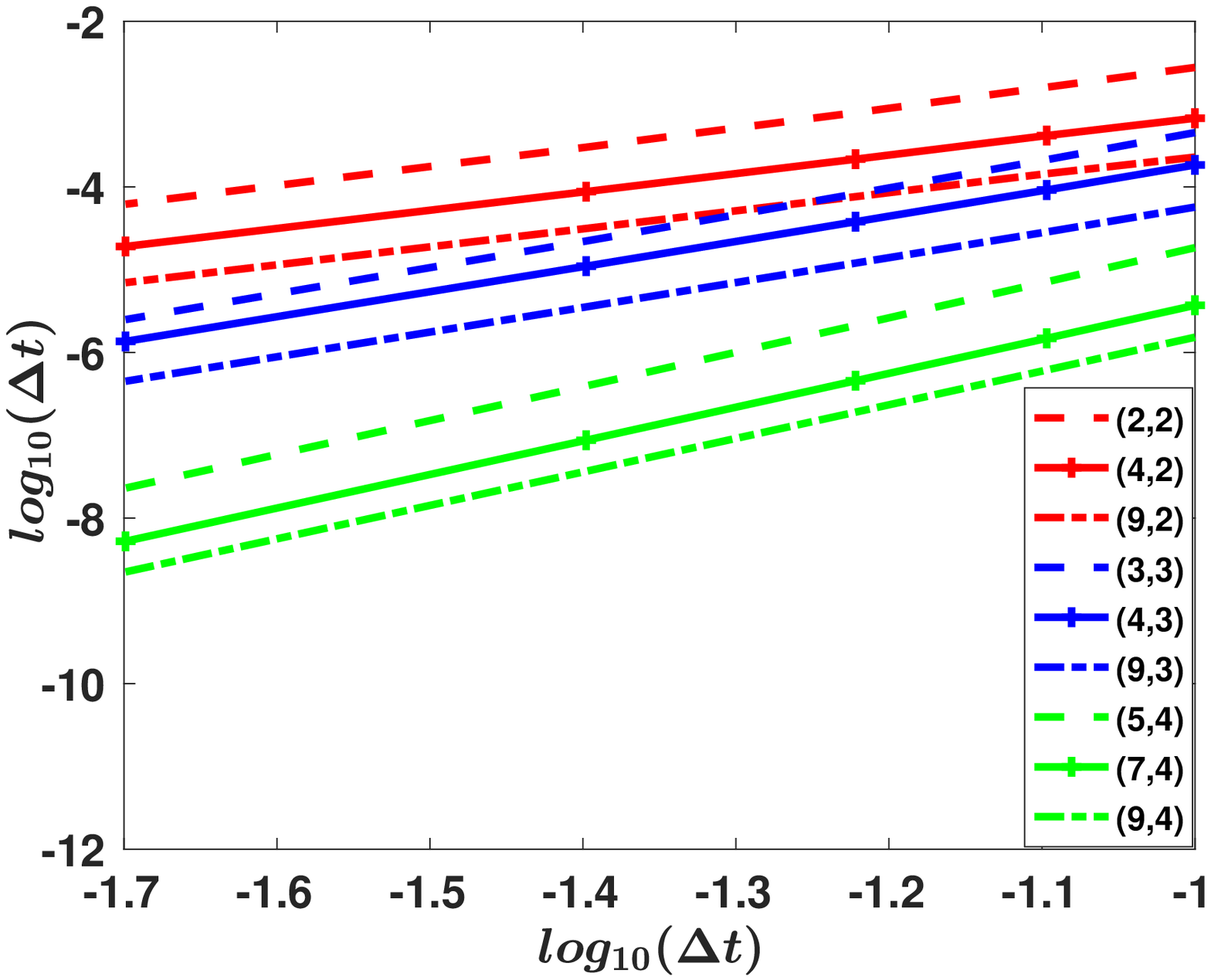}
\includegraphics[scale=.325]{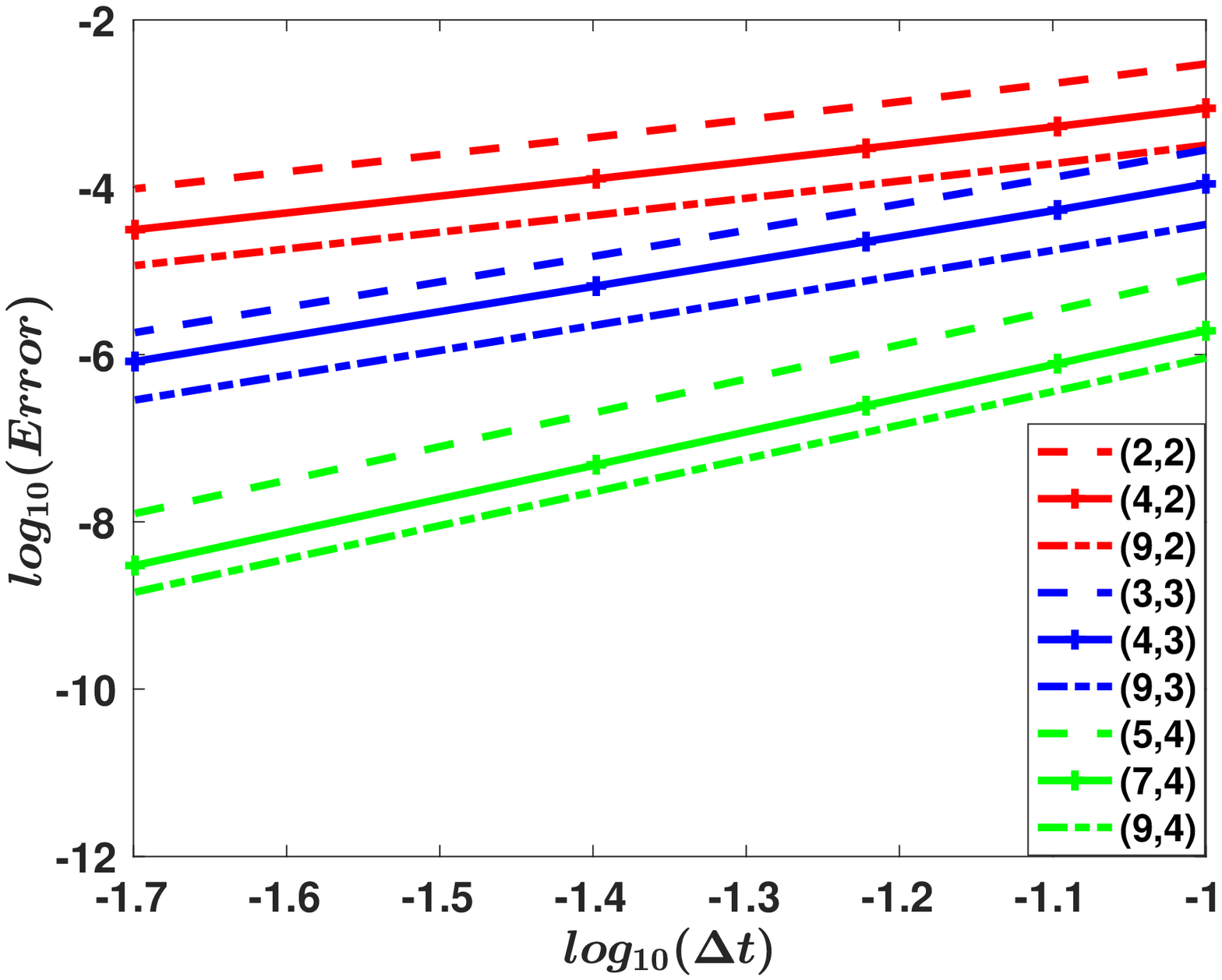}
\end{center} 
\caption{Example 1. 
The second order methods are in red, third order in blue, and fourth order in green.
The eSSPIFRK methods with $(s,p)=(2,2), (3,3), (5,4)$ have a dashed line,
those with $(s,p)=(4,2), (4,3), (7,4)$ have a solid line, and 
those with $s=9$ have a dot-dash line. On the left is splitting (a) while on the right is splitting (b).
} \label{fig:VDPconv}
\end{figure}  
While the splitting affects the magnitude of the errors, we see that the order of the errors is not
affected.
As expected, the error constants are smaller for methods with more stages.

Note that we used a van der Pol problem that is not highly oscillatory. This is because 
we wish to avoid the order reduction that is known to occur with 
integrating factor methods. This convergence study purposely avoids this issue in order to test the formal 
convergence of the generated methods.

\subsection{Example 2: Accuracy study}  
Consider the 
\begin{align}
u_t + 10 u_x +  \left( \frac{1}{2} u^2 \right)_x & = 0 \hspace{.75in}
    u(0,x)  = e^{\sin(2\pi x)}
\end{align}

on the domain $ 0 \leq x \leq 1$.
We use a first order upwind finite difference to spatially discretize the linear advection term
and the fifth order WENO for the nonlinear term. We use $64$  points in space and
the  {\tt globalorder.m} script in the package EXPINT  \cite{EXPINT,EXPINTpackage} with its
built-in exponential time-differencing (ETD) Runge--Kutta methods of orders $p=2,3,4$ 
(the schemes by Cox and Matthews called ETDRK2, ETDRK3, and ETDRK4 in EXPINT) 
and our eSSPIFRK methods with
$(s,p)= (2,2), (3,3), (5,4)$. The {\tt globalorder.m} script uses MATLAB's embedded 
ODE15s with $AbsTol=10^{-15}$ and $RelTol=5 \times 10^{-14}$
 to compute the highly accurate reference solution.
 In Figure \ref{fig:Accuracy} (left) we observe that the eSSPIFRK methods are competitive with 
 the ETD methods in terms of accuracy.   Despite the fact that we see order reduction
 in the third and fourth order eSSPIFRK methods, the accuracy of these methods is 
 comparable to that of the corresponding ETD method.

 A comparison of the CPU times needed for a given level of accuracy 
in  Figure \ref{fig:Accuracy} (right) 
  reveals that the ETD methods
 are generally somewhat  more efficient on this smooth problem.
 However,  we will see below that the ETD methods in fact 
 require inefficiently small time-steps for nonlinear stability in problems 
 with discontinuities that are of interest to us.

 \begin{figure}[t] 
\includegraphics[scale=.31]{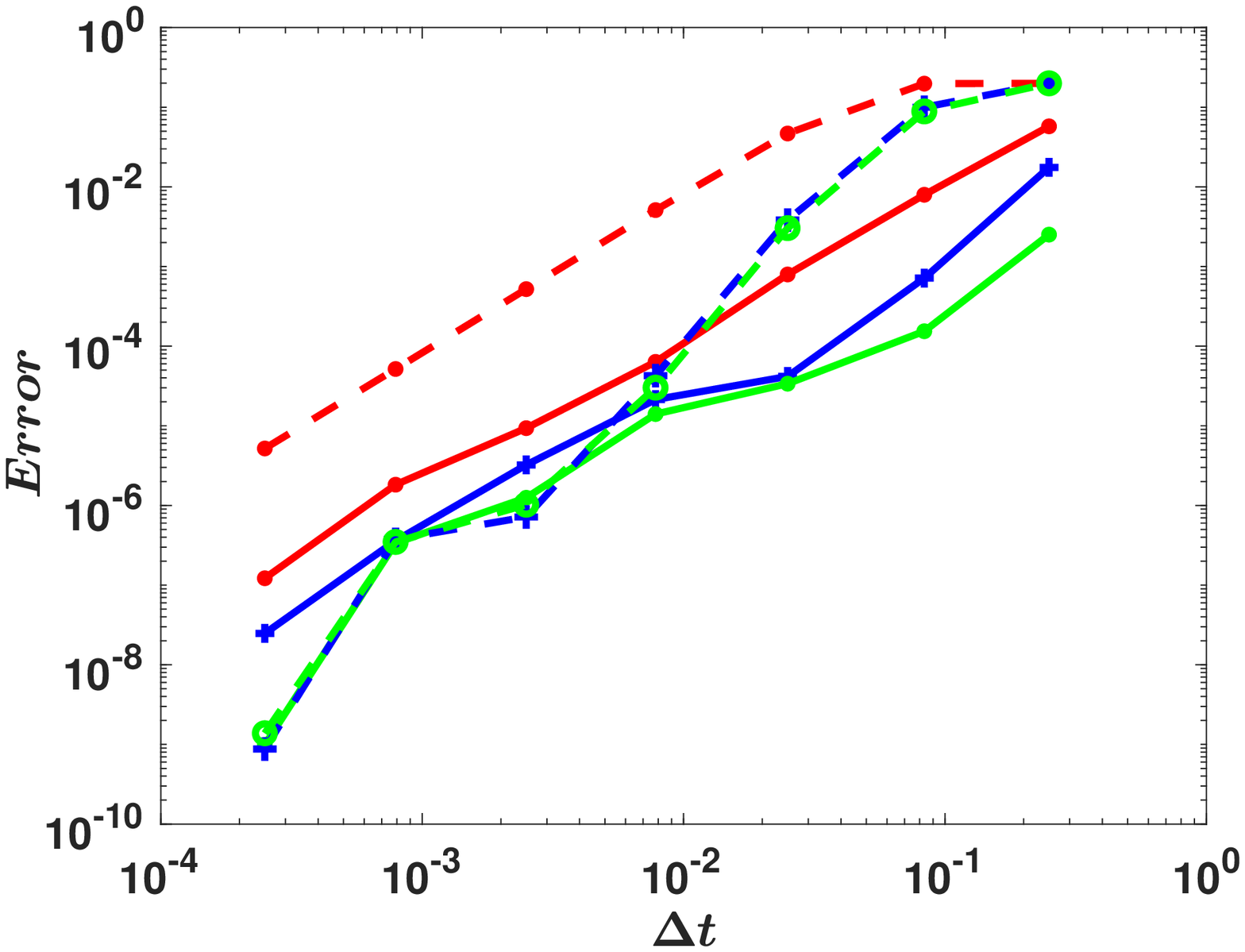}
\includegraphics[scale=.31]{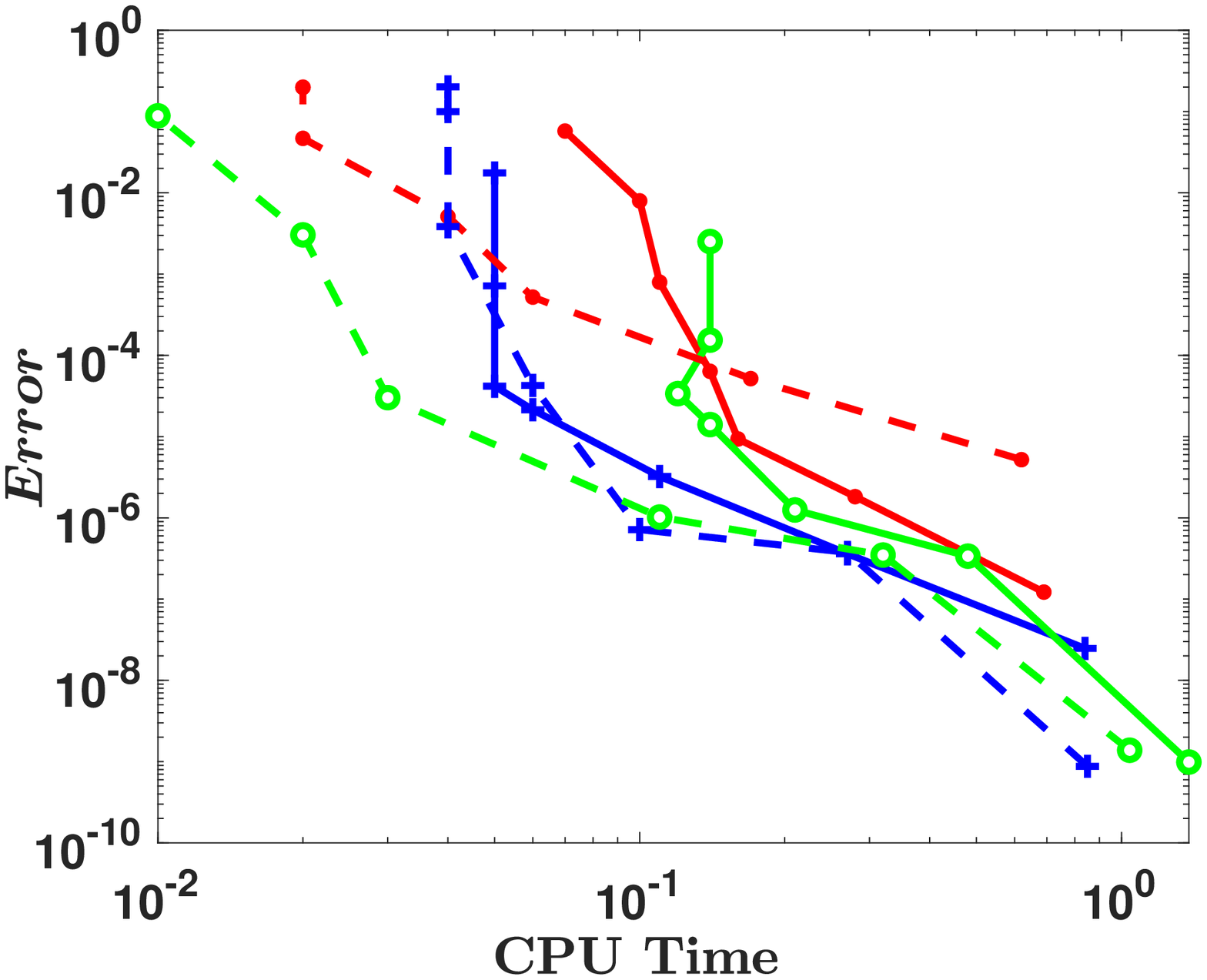}
\caption{ Example 2.
The second order methods are in red, third order in blue, and fourth order in green,
dashed lines represent the ETD methods while solid lines are the eSSPIFRK methods.
}
 \label{fig:Accuracy}
\end{figure}


\subsection{Example 3:  Sharpness of SSP  time-step for a linear problem} 
We consider the linear advection equation with a step function initial condition:
\begin{align}
u_t + a u_x + u_x & = 0 \hspace{.75in}
    u(0,x)  =
\begin{cases}
1, & \text{if } \frac{1}{4} \leq x \leq \frac{3}{4} \\
0, & \text{else } \nonumber
\end{cases}
\end{align}
on the domain $[0,1]$ with periodic boundary conditions.
We use a first-order forward difference for each of the spatial derivatives 
to semi-discretize this problem on a grid  of $0 \leq x \leq 1$ with 1000 points in space and evolve it ten time-steps forward. 

It is known that  this spatial discretization when coupled with forward Euler is TVD, under the time-step restriction
$ \tDtFE  = \frac{1}{a} \Dx$ for the   term $L u \approx a u_x$,
and the restriction $ \DtFE  =  \Dx$ for  the term $ N(u) \approx u_x$.

We measure the total variation of the numerical solution at each stage (to ensure internal stage monotonicity), 
and compare it to the  total variation at the previous stage. We are interested in the size of  time-step $\dt$ at which the total variation
begins to rise. We refer to this value as the {\em observed TVD time-step} $\dt^{TVD}_{obs}$. We
compare this value with the expected TVD time-step dictated by the theory. We call the SSP coefficient
corresponding to the value of the observed TVD time-step the {\em observed SSP coefficient} $\sspcoef_{obs}$.
Note that the expected TVD time-step $ \DtFE  =  \Dx$, so that 
 \[ \sspcoef_{obs} =  \frac{\dt^{TVD}_{obs}}{\DtFE} = \frac{\dt^{TVD}_{obs}}{\Dx} = \lambda^{TVD}_{obs}.\]
 
\begin{figure}[h]
\begin{center}
  \includegraphics[scale=.375]{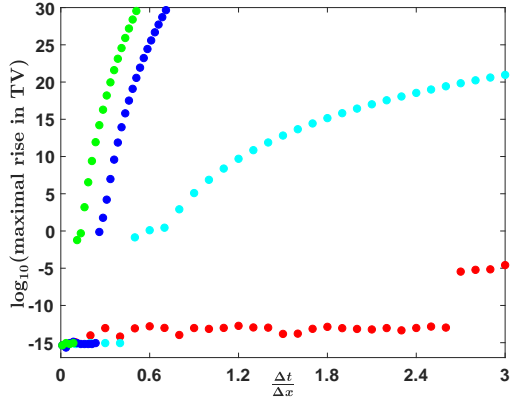} 
 \end{center}
    \caption{ Example 3: Linear advection with a step function initial condition and $a=10$.
The eSSPRK(5,3) method is in blue, the eSSPKG(5,3) in green, the 
 IMEXSSP(5,3,$K=0.1$) method in cyan, and the eSSPIFRK(5,3) in red.
 \label{fig:ex3TVD} }
\end{figure}

\subsubsection{Comparison of integrating methods for wavespeed $a=10$}
First we consider this problem with wavespeed $a=10$.
In Figure \ref{fig:ex3TVD}, we show the observed maximal rise in total variation (on the y-axis)
 when this equation is evolved forward by
a variety of methods using different  values of the Courant number $\lambda = \frac{\dt}{\dx}$ (on the x-axis).

Two methods,  eSSPRK(5,3) (blue) from \cite{SSPbook2011} 
 and  eSSPKG(5,3) (green)     generated in this work,
treat the two spatial terms in the same manner. In other words, they evolve the linear advection problem with 
wavespeed $(1+a)=11$. As expected, the  observed  time-step before the total variation begins to rise is
given by the SSP coefficient scaled by the wavespeed. Our numerical results (represented in Figure \ref{fig:ex3TVD})
show that the eSSPRK(5,3) method has an allowable Courant number 
  $\lambda^{TVD}_{obs} = \frac{\sspcoef}{11}  = \frac{2.6506}{11} = 0.2409$ before the maximal total variation 
  begins to rise. On the other hand,  the SSP  Kinnmark and Gray method eSSPKG(5,3)  has a smaller allowable 
  value of $\lambda^{TVD}_{obs} = \frac{\sspcoef}{11}  = \frac{1}{11} = 0.0909$ 
  before the maximal total variation begins to rise.

 Next, we turn our attention to IMEX methods. As shown in \cite{SSPIMEX}
 both the implicit and explicit terms have to be SSP, and the wavespeed impacts
 the size of the allowable SSP time-step, even though the fast wave is treated implicitly.\footnote{In fact, Table
 \ref{tab:linstab} we observe that an IMEX scheme composed of the Shu-Osher eSSPRK(3,3) for the explicit
 part and an A-stable implicit part does not preserve the TVD property for any time-step in our numerical tests.}
 We use the SSP  implicit-explicit IMEX(5,3,$K=0.1$) method (in cyan) where the slow wave $u_x$ 
 is treated explicitly and the fast wave $10 u_x$ is treated implicitly. 
We use the IMEX(5,3,$K=0.1$) method found in  \cite{SSPIMEX},
which is specially optimized in terms of the allowable SSP time-step for the
 value of $K = \frac{1}{a} = 0.1$.  This method has a $\sspcoef = 0.407$ for the value $a=10$.  
As predicted,  the observed   time-step before the total variation begin to rise is
 $\lambda^{TVD}_{obs} = \frac{\sspcoef}{10} = 0.407$. 
We see here that using an IMEX SSP method where the fast moving wave is treated implicitly does not give us much benefit 
when we are concerned with the TVD behavior of the scheme: it allows us only an increase of 69\% in time-step, at the major cost of 
an implicit solve. 

Finally, we use the eSSPIFRK(5,3) method (in red) resulting from using our eSSPRK+(5,3) method in  formulation
 \eqref{rkIFSO}  where $N(u) = - D u$ and $Lu = - 10 D u$. This method has a SSP coefficient $\sspcoef= 2.635 $
 for any value of $a$ and indeed we observe that the allowable time-step before we see a rise in the total variation is 
 $\dt^{TVD}_{obs} = \frac{\sspcoef} \dx$. This means that the largest allowable time-step for the  eSSPIFRK(5,3) 
 is more than ten times larger than that for the eSSPRK(5,3), and more than six times larger than for the IMEX method. 
 Additionally, this result is produced without much additional cost, as the few 
 matrix exponentials needed are pre-computed  once for the entire simulation. 
 Results of the  allowable SSP values for more methods are presented in Table \ref{tab:linstab}  below.

\subsubsection{Considering different wavespeeds}
The main advantage of the SSP integrating factor Runge--Kutta  schemes is that the allowable SSP time-step is not impacted by the 
wavespeed. In this section, we show how the new eSSPIFRK methods perform equally well for different wavespeeds,
and how other methods (including IMEX methods) do not.

Using $a=0$, we verify the expected TVD time-step for most of the 
methods considered (Table \ref{tab:ex2}). However, we observed that for the methods
eSSPIFRK(3,3)  and eSSPIFRK(5,4),  $\sspcoef_{obs}>\sspcoef$. 
For the eSSPIFRK(3,3) 
method,  $\sspcoef = \frac{3}{4}$ but the observed value is $\sspcoef_{obs} =1$. 
This is easy to understand because for this linear problem with no exponential component ($a=0$),
all methods with the same number of stages as order  ($s=p$) are equivalent. Thus,   for this special case, our
eSSPIFRK(3,3)  method can be re-arranged into the eSSPRK(3,3) Shu-Osher method, and we observe the expected TVD
 time-step for that method. 
 A similar phenomenon occurs  for the eSSPIFRK(5,4) method. In this case,
 we can write the stability polynomial of each stage recursively. If we look at the stability polynomial $P(z)$ of the
 fourth stage, we observe that its third derivative becomes negative at $z=1.5594$, which is precisely the
 observed TVD time-step for this method. When we look into the stages to see where the rise in TV occurs,
 we see that it first happens  in the fourth stage of the first time-step.

  \begin{table} 
 \begin{center}
 \begin{tabular}{|l|c|cccc|} \hline
 Method  &   $\sspcoef $  &    \multicolumn{4}{|c|}{$\lambda^{TVD}_{obs}$} \\ 
 & & for $a=0$ &   $a=1.0$ &  $a=10$& $a=20$ \\ \hline
 eSSPIFRK(2,2) &  1 &  1 & 1 & 1 & 1 \\
eSSPIFRK(9,2) &   8 &  8 & 8 & 8 & 8 \\
eSSPIFRK(3,3) & 3/4 &  1 & 3/2 & 3/2 & 3/2 \\
eSSPIFRK(4,3) & 20/11 &  20/11 & 20/11 & 20/11 & 20/11 \\
eSSPIFRK(9,3) & 6 &  6 & 6 & 6 & 6 \\
eSSPIFRK(5,4) & 1.346 & 1.5594 &2.158 &2.158 &2.158 \\
eSSPIFRK(6,4)&  2.273 &  2.273 &  2.273 &  2.273 &  2.273 \\
eSSPIFRK(9,4) & 4.306 & 4.306 & 4.306 & 4.306 & 4.306  \\ \hline
 \end{tabular}
 \end{center}
 \caption{ \label{tab:ex2} The observed SSP coefficient compared to the predicted SSP coefficient
 for Example 3 with various wavespeeds $a$. The value of $a$ does not negatively impact the observed SSP coefficient.  }
 \end{table}

 Next, we consider various values of $a > 0$. The results in Table \ref{tab:ex2} confirm that 
{\bf the value of $a$ does not negatively impact the observed SSP coefficient for this linear example.}\footnote{We note that for 
values larger than $a=20$, the   significant damping that occurs due to the exponential masks the oscillation 
and its associated rise in total variation. }
 In these cases too, we observe that for most methods the SSP condition is sharp, i.e, $\sspcoef_{obs} = \sspcoef$ 
 for all values of $a$. The observed TVD time-step for   eSSPIFRK(3,3)  and
 eSSPIFRK(5,4)  are, once again, larger than expected. 
For the  eSSPIFRK(3,3)  method $\sspcoef_{obs}=\frac{3}{2}$ is a result of the
 rise in TV from the first stage, which has a step-size $\frac{2}{3} \dt$.

Finally, we compare the eSSPIFRK(4,3) method to the explicit SSP Runge--Kutta method and to IMEX methods.
Table \ref{tab:ex2b} shows that the  value of $a$ does not negatively impact the observed SSP coefficient
 for the eSSPIFRK method. When the eSSPRK(4,3) method is applied to the linear advection problem with 
 wavespeed $a+1$, the observed  SSP coefficient matches the predicted
 \[ \lambda^{TVD}_{obs} = \frac{\sspcoef_{obs} }{a+1} =   \frac{\sspcoef }{a+1}   = \frac{2}{a+1}\]
 as shown in Table \ref{tab:ex2b}. We also show the observed SSP coefficient for the SSP IMEX methods
IMEXSSP(4,3,K) from  \cite{SSPIMEX}. 
These methods have SSP explicit and implicit parts and were optimized for the SSP step size 
for each value of $K=\frac{1}{a}$
in \cite{SSPIMEX}.  As we expect from SSP theory, the observed value of $\lambda$ before a rise in total variation
occurs decays linearly as the wavespeed $a$ rises.

\begin{table} 
 \begin{center}
 \begin{tabular}{|l|cccccc|} \hline
 Method    &  \multicolumn{6}{|c|}{$\lambda^{TVD}_{obs}$} \\ 
 &  $a=0$ &   $a=1.0$ & $a=2$  &$a=10$& $a=20$ & $a=100$ \\ \hline
eSSPIFRK(4,3) &   1.818 & 1.818& 1.818& 1.818& 1.818& 4.200  \\
SSPIMEX(4,3,K ) &  2.000 & 1.476 & 1.192 & 0.310 & 0.162 & 0.033 \\
eSSPRK(4,3) & 2.000 & 1.000& 0.666 & 0.181 & 0.0952 & 0.019 \\ \hline
 \end{tabular}
 \end{center}
 \caption{ \label{tab:ex2b} The observed SSP coefficients for an eSSPIFRK method, IMEX method, and 
 explicit Runge--Kutta method
 for Example 3 with various wavespeeds $a$. The value of $a$ does not negatively impact the observed SSP coefficient
 for the eSSPIFRK method, but it does for the IMEX and the explicit Runge--Kutta methods.  }
 \end{table}

\subsubsection{Comparison to linear stability properties}
In Figure \ref{fig:ex3TVD} we notice that for each method, as the Courant number $\lambda = \frac{\dt}{\dx}$ gets large enough,
the maximal rise in total variation jumps up. For some methods, this happens for very small values of $\lambda$,
while for others, it happens when $\lambda$ is larger. It is  interesting to see how this $\lambda^{TVD}$ allowable for the
TVD property compares to the CFL number $\lambda^{L_2}$ allowed for linear stability for this problem.
Once again, we use $a=10$ and compare the allowable
time-step for  linear stability and the allowable predicted and observed  TVD  time-step  
in Table \ref{tab:linstab}.  We  evaluate the allowable time-step for linear stability of a given method  for this
particular problem by calculating the 
stability polynomial for these operators and determining at which value of $\lambda^{L_2}$  
the $L_2$ norm of the resulting matrix becomes greater than one.

We notice that  linear stability does not provide any prediction on the TVD behavior of these methods.
The most striking example of this is when looking at the IMEX methods; in terms of linear stability,
these can clearly eliminate the constraint coming
from the fast wave (e.g., IMEXSSP(3,3,$K=\infty$)), and provide a large region of linear stability. However, this method fails
to be SSP for any positive time-step. Even a specially designed and optimized SSP IMEX method did not even 
double the allowable time-step for TVD. 

These results underscore the fact that when solving problems 
with discontinuities, the relevant time step restriction is dictated by $\lambda^{TVD}$ rather than $\lambda^{L_2}$.
The most notable result from this table is that all the integrating factor methods have a very large $L_2$ linear stability region for this problem.
In fact, we tested them up to $\lambda=27$, which is ten times larger than the largest allowable time-step for TVD, 
and they were still stable at this value. Clearly, the integrating approach has advantages for linear stability as well as 
for strong stability preservation.

We also note from the results in Table \ref{tab:linstab} that although the Kinnmark and Gray methods are designed to have larger 
imaginary axis linear stability regions, for our problem this does not give them an advantage even for linear stability.
This is easily understood when we consider that the eigenvalues of our differentiation operator are a circle in the 
complex plane, and so the linear stability regions of the regular eSSPRK methods are better suited than those
of the eSSPKG methods of Kinnmark and Gray (as seen in Figure \ref{KGstability}).

\smallskip

\begin{table}
\begin{center}
\begin{tabular}{|l|l|ll|} \hline
Method & $\lambda^{L_2}_{obs}$ &  $ \lambda^{TVD}_{pred}$  &   $\lambda^{TVD}_{obs}$ \\ 
& & &  \vspace{0.01pt}  \\ \hline
eSSPRK(3,3)	                &	0.114	& 1/11	& 0.090 \\
eSSPKG(3,3)			&	0.114	& 1/11	& 0.090 \\
eSSPRK+(3,3)                  & 	0.114	& 3/44	& 0.090 \\
eSSPKG+(3,3)	                &	0.114	& 1/11	& 0.090 \\
IMEXSSP(3,3,$K=0.1$))	&     	0.448	& 0.149	& 0.236 \\
IMEXSSP(3,3,$K=\infty$))& 	1.198	& 0.000	& 0.000 \\  
eSSPIFKG(3,3) 		&  * 			& 0.750	& 1.500 \\
eSSPIFRK(3,3)			&  * 			&  0.750	& 1.500 \\ \hline
eSSPRK(5,3)	 		&	0.260	&0.240 	& 0.240 \\
eSSPKG(5,3)  			& 	0.138	& 1/11 	&  0.090\\
eSSPRK+(5,3)			&	0.261	& 0.239	& 0.239 \\ 
eSSPKG+(5,3) 			&	0.138	& 1/11	& 0.090 \\
IMEXSSP(5,3,$K=0.1$)	& 	0.683	& 0.407	& 0.407 \\  
eSSPIFKG(5,3) 		&  * 			& 0.875	& 1.487 \\
eSSPIFRK(5,3)			& * 			& 2.635 	& 2.635 \\ \hline
eSSPRK(6,4)			&	0.273	& 0.208	&0.208 \\
eSSPKG(6,4)			&	0.146	& 1/11  	&  0.090 \\
eSSPRK+(6,4)			&	0.270	& 0.206	& 0.206 \\
eSSPKG+(6,4)			& 	0.146	& 0.071	& 0.090 \\ 
eSSPIFKG(6,4)			& * 			& 0.785	& 1.805 \\
eSSPIFRK(6,4) 		& * 			& 2.273	& 2.273 \\  \hline
\end{tabular}
\end{center}
\caption{The values of the observed CFL number $\lambda^{L_2} = \frac{\dt}{\dx}$ required for $L_2$ 
linear stability for Example 3: $u_t + u_x + 10 u_x = 0$, compared to the predicted  and observed  values $\lambda^{TVD}$.
An * indicates that these methods were linearly $L_2$ stable for the largest values tested, $\lambda \leq 27$.
} \label{tab:linstab}
\end{table}

\subsection{Example 4:  Sharpness of SSP  time-step for a nonlinear problem}
  Consider the equation:
  \begin{align}
u_t + a u_x +  \left( \frac{1}{2} u^2 \right)_x & = 0 \hspace{.75in}
    u(0,x)  =
\begin{cases}
1, & \text{if } 0 \leq x \leq 1/2 \\
0, & \text{if } x>1/2 \nonumber
\end{cases}
\end{align}
on the domain $[0,1]$ with periodic boundary conditions.
We used a first-order upwind difference to semi-discretize this linear term, and a fifth order WENO finite difference for the 
nonlinear terms. We solved this problem on a spatial grid  with 400 points and evolved it forward 25 time steps
using $\dt = \lambda \dx$.
We measured the total variation at each stage, and calculated the maximal rise in total
variation over each stage for these 25 time steps. 

In  Figure \ref{fig:WENOBurgers_adv} (left) we use the value of $a=5$ and 
graph the $log_{10}$ of the maximal rise in total variation
versus the ratio $\lambda = \frac{\dt}{\Delta x}$. We observe that our eSSPIFRK(3,3) method
\eqref{SSPIFRK33} maintains a very small maximal rise in total variation until close to $\lambda = 0.8$,
while the fully explicit third order Shu-Osher method begins to feature a large rise in total variation
for a much smaller value of $\lambda=.15 \approx \frac{1}{1+a}$.
In contrast, the three-stage third order ETD Runge--Kutta   \cite{CoxMatthews2002} 
and the  integrating factor method based on the Shu-Osher method \eqref{SOIF}
both have a maximal rise in total variation that increases rapidly with $\lambda$.

\newpage

In  Figure \ref{fig:WENOBurgers_adv}  (right) we show a similar study using wavespeed  $a=10$ and fourth order methods.
versus the ratio $\lambda = \frac{\dt}{\Delta x}$. We observe that our eSSPIFRK(5,4), 
eSSPIFRK(6,4), and eSSPIFRK(9,4)  methods maintain a very small maximal rise in total variation until close to 
$\lambda = 1.06, 1.21, 2.41$, respectively.
In comparison, the  SSP IMEX (5,4) method features an observed $\lambda$ value of $\lambda_{obs} = .25$, the fully explicit SSPRK(10,4)
has $\lambda_{obs} = 0.58$, and the maximal total variation from the simulation using the ETDRK4 
method starts rising rapidly from the smallest value of $\lambda$.

\begin{figure}
\begin{center}
  \includegraphics[scale=.3]{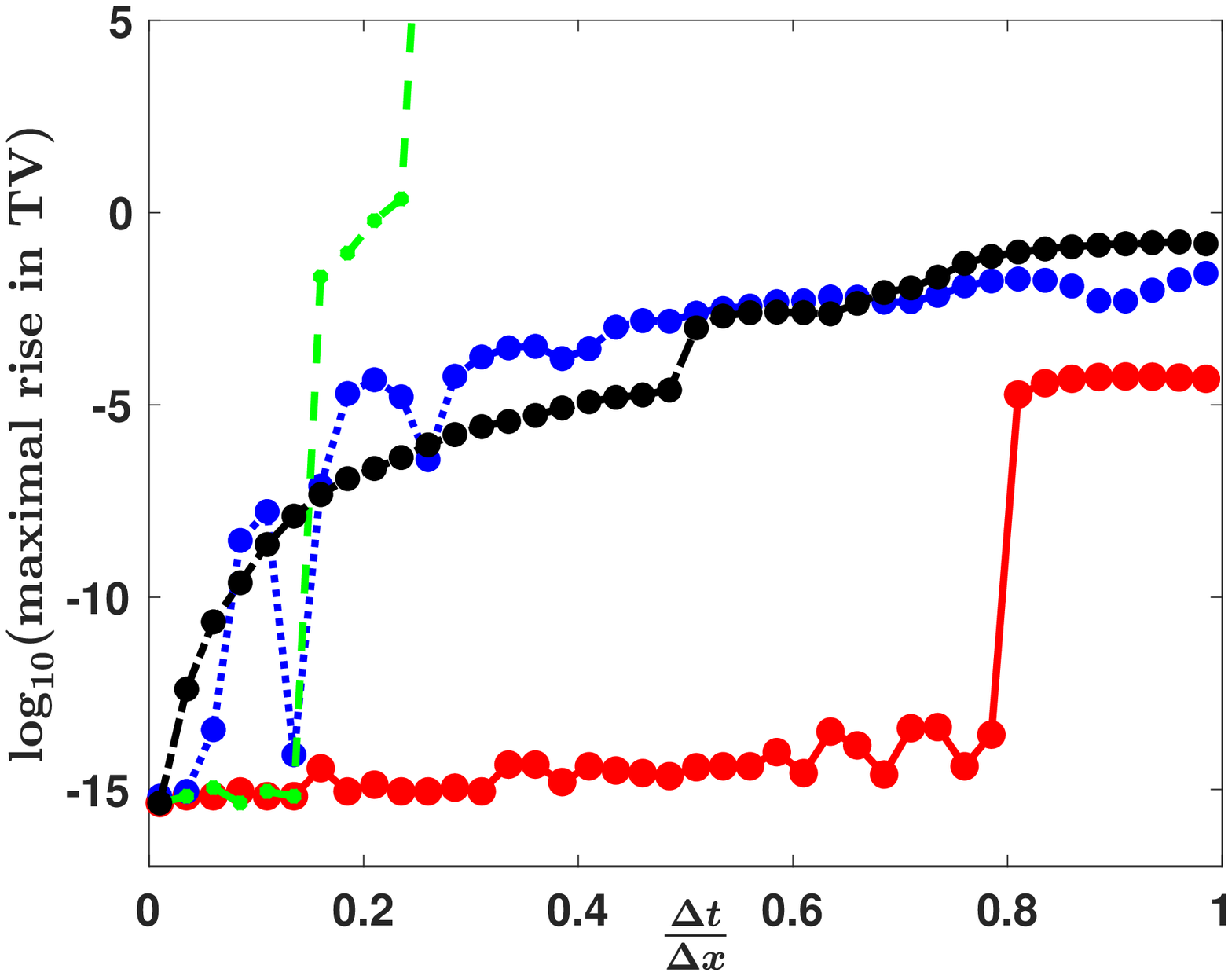} 
  \includegraphics[scale=.3]{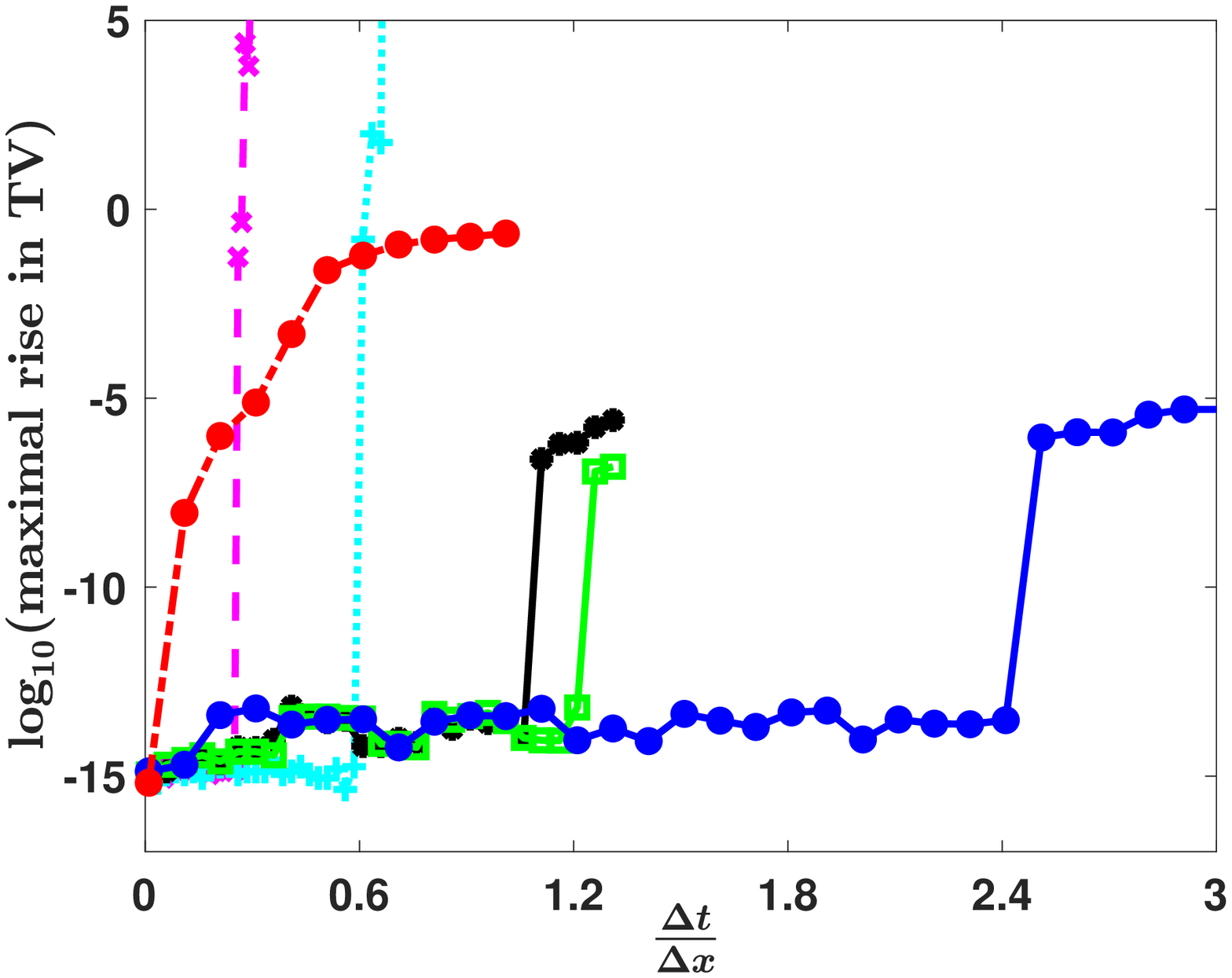} 
    \caption{ Example 4.
Left:  $a=5$. The red solid line is the eSSPIFRK(3,3) method,
the blue dotted line is the IFRK method based on the eSSPRK(3,3) Shu-Osher method,
the black dash-dot line is the ETDRK3 method, and the green dashed line is the 
third order explicit  eSSPRK(3,3)  Shu-Osher method. Right: $a=10$.
The dashed magenta line with x markers is the SSP  IMEX (5,4) method.
The dash-dot red line with filled circle markers is the  ETDRK4 method.
The dotted cyan line with + markers is the SSPRK(10,4) method by
Ketcheson
The solid lines are the SSPIFRK methods: the 
black line with star markers for (5,4), green line with square markers for (6,4) and  blue line with circle markers for (9,4).
 \label{fig:WENOBurgers_adv} }
 \end{center}
\end{figure}

\section{Conclusions}  \label{sec:conclusions}
This is the first work to consider strong stability  preserving integrating factor Runge--Kutta methods.
In this work we presented sufficient conditions for preservation of strong stability for integrating factor Runge--Kutta methods.
These eSSPIFRK methods are based on eSSPRK methods with non-decreasing abscissas. We used these conditions
to develop an optimization problem which we used to find such eSSPRK$^+$ methods. 
We then showed that these eSSPIFRK methods perform in practice as
expected, significantly out-performing the implicit-explicit (IMEX) SSP Runge--Kutta methods and the ETD methods of Cox and Matthews
on problems that require the SSP property.

\bigskip

{\bf Acknowledgment.} 
This publication is based on work supported by  AFOSR grant FA9550-15-1-0235.
The authors thank David Ketcheson and the anonymous referees for their helpful remarks on this work.


\begin{thebibliography}{10}

\bibitem{AlMohyHigham}
{\sc A.H. Al Mohy and  N.J. Higham},
{\em Computing the action of the matrix exponential of a matrix,
with an application to exponential integrators},
SIAM Journal on Scientific Computing 33(2) (2011), pp.~488--511.


\bibitem{baiotti2005}
{\sc L. Baiotti, I. Hawke, P.J. Montero, F. Loffler, L. Rezzolla,
  N. Stergioulas, J.A. Font and E. Seidel}.
{\em Three-dimensional relativistic simulations of rotating neutron-star
  collapse to a {K}err black hole.}
Physical Review D 71 (2005), pp.~24--35.


\bibitem{balbas2005}
{\sc  J. Balb{\'a}s and E. Tadmor.}
{\em  A central differencing simulation of the {O}rszag-{T}ang vortex system},
IEEE Transactions on Plasma Science 33 (2005), pp.470--471. 


\bibitem{bassano2003}
{\sc E. Bassano},
{\em Numerical simulation of thermo-solutal-capillary migration of a
  dissolving drop in a cavity},
International Journal for Numerical Methods in Fluids 41 (2003), pp.~765--788.


\bibitem{EXPINTpackage}
{\sc H.~Berland, B.~Skaflestad, and W.~M. Wright}, {\em Expint -- a
  {M}{A}{T}{L}{A}{B} package for exponential integrators}, 2005.

\bibitem{EXPINT}
{\sc H.~Berland, B.~Skaflestad, and W.~M. Wright}, {\em Expint -- a
  {M}{A}{T}{L}{A}{B} package for exponential integrators}, ACM Transactions in
  Mathematical Software, 33 (2007).

\bibitem{Butcher}
{\sc J.C. Butcher},
{\em The numerical analysis of ordinary differential equations: Runge-Kutta and general linear methods},
Wiley-Interscience New York, NY, USA 1987 


\bibitem{caiden2001}
{\sc R. Caiden, R.P. Fedkiw and C. Anderson},
{\em A numerical method for two-phase flow consisting of separate
  compressible and incompressible regions},
Journal of Computational Physics 166 (2001), pp.~1--27.

\bibitem{carrillo2003}
{\sc J. Carrillo, I.M. Gamba, A. Majorana and C.-W. Shu},
{\em A {WENO}-solver for the transients of {B}oltzmann-{P}oisson system for
  semiconductor devices: performance and comparisons with {M}onte {C}arlo methods},
Journal of Computational Physics 184 (2003), pp.~498--525.

\bibitem{cheng2003}
{\sc  L.-T. Cheng, H. Liu and S. Osher},
{\em Computational high-frequency wave propagation using the level set
  method, with applications to the semi-classical limit of {S}chrodinger  equations},
Communications in Mathematical Sciences 1 (2003), pp.~593--621.

\bibitem{cheruvu2007}
{\sc V. Cheruvu, R.D. Nair and H.M. Tufo},
{\em  A spectral finite volume transport scheme on the cubed-sphere},
Applied Numerical Mathematics 57 (2007), pp.~1021--1032.

\bibitem{cockburn1989}
{\sc B. Cockburn and C.-W. Shu},
{\em  {TVB} {R}unge-{K}utta local projection discontinuous {G}alerkin finite
  element method for conservation laws {II}: general framework},
Mathematics of Computation 52 (1989), pp.~411--435.

\bibitem{cockburn2004}
{\sc B. Cockburn, F. Li and C.-W. Shu},
{\em  Locally divergence-free discontinuous {G}alerkin methods for the
  {M}axwell equations},
Journal of Computational Physics 194 (2004), pp.~588--610.


\bibitem{cockburn2005}
{\sc B. Cockburn, J. Qian, F. Reitich and J. Wang},
{\em  An accurate spectral/discontinuous finite-element formulation of a
  phase-space-based level set approach to geometrical optics},
Journal of Computational Physics 208 (2005), pp.~175--195.


\bibitem{SSPIMEX} 
{\sc S. Conde, S. Gottlieb, Z. Grant, J.N. Shadid},
{\em Implicit and Implicit-Explicit Strong Stability Preserving RungeÐKutta Methods with High Linear Order},
 Journal of Scientific Computing 73(2)  (2017), pp.~667--690.


\bibitem{CoxMatthews2002}
{\sc S.~Cox and P.~Matthews}, {\em Exponential time differencing for stiff
  systems}, Journal of Computational Physics, 176 (2002), pp.~430--455.

\bibitem{delzanna2002}
{\sc  L. Del Zanna and N. Bucciantini},
{\em An efficient shock-capturing central-type scheme for multidimensional
  relativistic flows: I. hydrodynamics},
Astronomy and Astrophysics  390 (2002), pp.~1177--1186.


\bibitem{enright2002}
D. Enright, R. Fedkiw, J. Ferziger and I. Mitchell.
\newblock A hybrid particle level set method for improved interface capturing.
\newblock {\em Journal of Computational Physics}, 183:83--116, 2002.


\bibitem{feng2004}
{\sc L.-L. Feng, C.-W. Shu and M. Zhang},
{\em  A hybrid cosmological hydrodynamic/n-body code based on a weighted
  essentially nonoscillatory scheme},
The Astrophysical Journal  612 (2004), pp.~1--13.

\bibitem{ferracina2004}
{\sc L. Ferracina and M.N. Spijker},
{\em Stepsize restrictions for the total-variation-diminishing property in general {R}unge-{K}utta methods},
 SIAM Journal of Numerical Analysis 42 (2004), pp.~1073-1093.

\bibitem{ferracina2005}
{\sc L. Ferracina and M.N. Spijker},
{\em An extension and analysis of the {S}hu-{O}sher representation of
  {R}unge-{K}utta methods},
Mathematics of Computation 249(2005), pp.~201--219.

\bibitem{ferracina2008}
{\sc  L. Ferracina and M.N. Spijker},
{\em  Strong stability of singly-diagonally-implicit {R}unge-{K}utta methods},
Applied Numerical Mathematics 2008


\bibitem{GaudreaultRainwaterTokman}
{\sc   S. Gaudreault, G. Rainwater, M. Tokman},
{\em KIOPS: A fast adaptive Krylov subspace solver for exponential integrators},
	arXiv:1804.05126 [math.NA] (2018).

\bibitem{HesthavenGottlieb2001}
{\sc  D. Gottlieb and J.S. Hesthaven},
{\em Spectral methods for hyperbolic problems},
Journal of Computational and Applied Mathematics, 
128 (2001),  pp.~83-131. 

\bibitem{GottliebTadmor1991}
{\sc D. Gottlieb and E. Tadmor}
{\em The CFL condition for spectral approximations to hyperbolic initial-boundary value problems},
Mathematics of Computation 56 (1991), pp.~565--588. 


\bibitem{SSPIFgithub}
{\sc S.~Gottlieb, Z.~Grant, and L.~Isherwood}, {\em Optimized strong stability
  preserving integrating factor {R}unge--{K}utta methods}.
\newblock \url{https://github.com/SSPmethods/SSPIFRK-methods}.

\bibitem{SSPbook2011}
{\sc S.~Gottlieb, D.~I. Ketcheson, and C.-W. Shu}, {\em Strong Stability
  Preserving Runge--Kutta and Multistep Time Discretizations}, World Scientific
  Press, 2011.

\bibitem{gottliebshu1998}
{\sc S.~Gottlieb and C.-W. Shu}, {\em Total variation diminishing Runge--Kutta
  methods}, Mathematics of Computation, 67 (1998), pp.~73--85.

\bibitem{gottlieb2001}
{\sc S.~Gottlieb, C.-W. Shu, and E.~Tadmor}, 
{\em Strong Stability Preserving  High-Order Time Discretization Methods}, 
  SIAM Review, 43 (2001), pp.~89--112.

\bibitem{harten1983}
{\sc A. Harten},
{\em High resolution schemes for hyperbolic conservation laws},
Journal of Computational Physics, 49 (1983), pp.~357--393.


\bibitem{HesthavenCLbook}
{\sc J.S. Hesthaven},
{\em Numerical methods for conservation laws: From analysis to algorithms},
 SIAM Publishing, Philadelphia (2017).

\bibitem{HGGbook}
{\sc J.S. Hesthaven, S. Gottlieb, and D. Gottlieb},
{\em Spectral Methods for Time Dependent Problems},
Cambridge Monographs on Applied and Computational Mathematics (No. 21)
Cambridge University Press (2006).  


\bibitem{higueras2004a}
{\sc I. Higueras},
{\em On strong stability preserving time discretization methods},
Journal of Scientific Computing  21 (2004), pp.~193--223.

\bibitem{higueras2005a}
{\sc I. Higueras},
{\em Representations of {R}unge-{K}utta methods and strong stability
  preserving methods},
SIAM Journal on Numerical Analysis  43 (2005), pp.~924--948.


\bibitem{HuAdamsShu2012}
{\sc X. Y. Hu, N. A. Adams, and C.-W. Shu},
{\em Positivity-preserving method for high-order conservative schemes solving compressible Euler equations},
Journal of Computational Physics 242 (2013), pp.~169--180.


\bibitem{hundsdorfer2003}
{\sc W. Hundsdorfer, S.J. Ruuth and R.J. Spiteri},
{\em Monotonicity-preserving linear multistep methods},
SIAM Journal on Numerical Analysis 41 (2003), pp.~605--623.

\bibitem{jin2005}
{\sc S. Jin, H. Liu, S. Osher and Y.-H.R. Tsai},
{\em Computing multivalued physical observables for the semiclassical
  limit of the {S}chrodinger equation},
Journal of Computational Physics 205 (2005), pp.~222--241.


\bibitem{KennedyCarpenterLewis2000}
{\sc C.A. Kennedy, M.H. Carpenter, and R.M. Lewis},
{\em Low-storage explicit Runge--Kutta schemes for the compressible Navier-Stokes
equations},
Applied Numerical Mathematics 35 (2000), pp.~177-219.

\bibitem{ketcheson2008}
{\sc D.~I. Ketcheson}, {\em Highly efficient strong stability preserving
  {R}unge--{K}utta methods with low-storage implementations}, SIAM Journal on
  Scientific Computing, 30 (2008), pp.~2113--2136.

\bibitem{ketcheson2009}
{\sc D.I. Ketcheson, C.B. Macdonald and S. Gottlieb},
{\em Optimal implicit strong stability preserving {R}unge-{K}utta methods},
Applied Numerical Mathematics 52 (2009), pp.~373--392.


\bibitem{ketcheson2009a}
 {\sc D. I. Ketcheson},
 {\em Computation of optimal monotonicity preserving general linear methods},
 Mathematics of Computation 78 (2009), pp.~1497--1513.

\bibitem{ketcheson2011}
 {\sc D. I. Ketcheson},
{\em Step sizes for strong stability preservation with downwind-biased operators},
SIAM Journal on Numerical Analysis 49 (4) (2011), pp.~1649--1660.

\bibitem{KG1984}
{\sc Kinnmark and Gray},
{\em One step integration methods with maximum stability regions},
Mathematics and Computers in Simulation 26 (2) (1984), 
pp.~87--92.


\bibitem{kraaijevanger1991}
{\sc J.~F. B.~M. Kraaijevanger}, 
{\em Contractivity of {R}unge--{K}utta methods}, 
BIT, 31 (1991), pp.~482--528.


\bibitem{KubatkoKetcheson}
{\sc E.J.  Kubatko, B. A. Yeager, and D. I. Ketcheson}, 
{\em Optimal strong-stability-preserving RungeÐKutta time discretizations for discontinuous Galerkin methods}, 
Journal of Scientific Computing 60(2)  (2014), pp.~313--344.


\bibitem{kurganov2000}
{\sc A. Kurganov and E. Tadmor},
{\em New high-resolution schemes for nonlinear conservation laws and
  convection-diffusion equations},
Journal of Computational Physics 160 (2000), pp.~241--282.


\bibitem{labrunie2004}
{\sc S. Labrunie, J. Carrillo and P. Bertrand},
{\em Numerical study on hydrodynamic and quasi-neutral approximations for
  collisionless two-species plasmas},
Journal of Computational Physics 200 (2004), pp.~267--298.


\bibitem{lawson1967}
{\sc J. D. Lawson},
{\em Generalized Runge-Kutta Processes for Stable Systems with Large Lipschitz Constants},
SIAM Journal on  Numerical Analysis, 4(3) (1967), pp.~372--380. 

\bibitem{Lax2006}
{\sc P.D. Lax}, {\em  Gibbs Phenomena},
Journal of Scientific Computing, 28 (2006), pp.~445--449.


\bibitem{lenferink1991}
{\sc H.W.J. Lenferink},
{\em Contractivity-preserving implicit linear multistep methods},
Mathematics of Computation 56 (1991), pp.~177--199.


\bibitem{LeVequeBook}
{\sc R. J. LeVeque},
{\em Numerical Methods for Conservation Laws},
ETH Lectures in Mathematics Series, Birkhauser-Verlag, (1990).


\bibitem{LevyTadmor1998}
{\sc D. Levy and E. Tadmor},
{\em From Semi-Discrete to Fully-Discrete: The Stability of Runge-Kutta Schemes by the Energy Method}, 
SIAM Review, 40(1) (1998), pp.~40--73.


\bibitem{liu1994}
{\sc X.-D. Liu, S. Osher and T. Chan},
{\em Weighted essentially non-oscillatory schemes},
Journal of Computational Physics 115(1) (1994), pp.~200--212.

\bibitem{Ma1989}
{\sc H. Ma},
{\em Chebyshev--Legendre super spectral viscosity method for nonlinear conservation laws},
SIAM Journal on Numerical Analysis 35(3) (1998), pp.~893--908.


\bibitem{MadayTadmor1989}
{\sc Y. Maday and E. Tadmor},
{\em Analysis of the spectral vanishing viscosity method for periodic conservation laws},
SIAM Journal on Numerical Analysis 26 (1989), pp.~854-870.


\bibitem{majda1978}
{\sc A. Majda and S. Osher},
{\em  A systematic approach for correcting nonlinear instabilities. the
  {L}ax-{W}endroff scheme for scalar conservation laws},
Numerische Mathematik 30 (1978), pp.~429--452.

\bibitem{mignone2005}
{\sc A. Mignone},
{\em The dynamics of radiative shock waves: linear and nonlinear  evolution},
The Astrophysical Journal 626 (2005), pp.~373--388.

\bibitem{dubious}
{\sc G. Golub and C. Van Loan},
{\em Nineteen Dubious Ways to Compute the Exponential of a Matrix, Twenty-Five Years Later}.
SIAM Review  45 (2003), pp.~801--836.

\bibitem{NiesenWright}
{\sc J. Niesen and W.M. Wright},
{\em Algorithm 919: A Krylov subspace algorithm for evaluating the $\phi$-functions appearing
in exponential integrators},
ACM Transactions on Mathematical Software 38(3) (2012), pp.~22. 


\bibitem{osher1984}
{\sc S. Osher and S. Chakravarthy},
{\em High resolution schemes and the entropy condition},
 SIAM Journal on Numerical Analysis  21 (1984), pp.~955--984.
 

\bibitem{pantano2007}
{\sc C. Pantano, R. Deiterding, D.J. Hill and D.I. Pullin},
{\em A low numerical dissipation patch-based adaptive mesh refinement
  method for large-eddy simulation of compressible flows},
Journal of Computational Physics 221 (2007), pp.~63--87.


\bibitem{patel2005}
{\sc  S. Patel and D. Drikakis},
{\em Effects of preconditioning on the accuracy and efficiency of
  incompressible flows},
International Journal for Numerical Methods in Fluids 47 (2005), pp.~963--970.


\bibitem{peng1999}
{\sc  D. Peng, B. Merriman, S. Osher, H. Zhao and M. Kang},
{\em A {PDE}-based fast local level set method},
Journal of Computational Physics 155 (1999), pp.~410--438.


\bibitem{ReddyTrefethen1990}
{\sc S.~C.Reddy and L.~N.Trefethen},
{\em   Lax stability of fully discrete  spectral methods via stability regions and pseudo-eigenvalues},
Computer Methods in Applied Mechanics and Engineering
80(1Ð3) (1990), pp.~147--164.


\bibitem{ruuth2001}
{\sc S.~J. Ruuth and R.~J. Spiteri}, {\em Two barriers on
  strong-stability-preserving time discretization methods}, Journal of
  Scientific Computation, 17 (2002), pp.~211--220.

\bibitem{shu1987}
{\sc C.-W. Shu}, 
{\em TVB uniformly high-order schemes for conservation laws}, 
Mathematics of Computation 49 (1987), pp.~105--121.


\bibitem{shu1988b}
{\sc C.-W. Shu}, {\em Total-variation diminishing time discretizations}, SIAM
  Journal on Scientific Statistical Computing 9 (1988), pp.~1073--1084.

\bibitem{shu1988}
{\sc C.-W. Shu and S.~Osher}, {\em Efficient implementation of essentially
  non-oscillatory shock-capturing schemes}, Journal of Computational Physics
  77 (1988), pp.~439--471.


\bibitem{Sidje}
{\sc R.B. Sidje},
{\em EXPOKIT: A software package for computing matrix exponentials},
ACM Transactions on Mathematical Software 24(1) (1998), pp.~130--156.


\bibitem{spijker1983}
{\sc M.N. Spijker},
{\em Contractivity in the numerical solution of initial value problems},
Numerische Mathematik 42 (1983), pp.~271--290.

\bibitem{spijker2007}
{\sc M.~Spijker}, {\em Stepsize conditions for general monotonicity in
  numerical initial value problems}, SIAM Journal on Numerical Analysis, 
  45 (2008), pp.~1226--1245.

\bibitem{SpiteriRuuth2002}
{\sc R.~J. Spiteri and S.~J. Ruuth}, {\em A new class of optimal high-order
  strong-stability-preserving time discretization methods}, SIAM Journal on
  Numerical Analysis, 40 (2002), pp.~469--491.

\bibitem{strang1964}
{\sc G. Strang},
{\em Accurate partial difference methods II: nonlinear problems},
Numerische Mathematik  6 (1964), pp.~37--46.

\bibitem{strikwerda1989}
{\sc J.C. Strikwerda},
{\em Finite Difference Schemes and Partial Differential Equations},
Cole Mathematics Series. Wadsworth and Brooks, California, 1989.

\bibitem{sun2006}
{\sc Y. Sun, Z.J. Wang and Y. Liu},
{\em Spectral (finite) volume method for conservation laws on unstructured
  grids {VI}: {E}xtension to viscous flow.},
Journal of Computational Physics, 215 (2006), pp.~41--58.

\bibitem{sweby1984}
{\sc  P.K. Sweby},
{\em High resolution schemes using flux limiters for hyperbolic
  conservation laws},
SIAM Journal on Numerical Analysis  21 (1984), pp.~995--1011.

\bibitem{tadmor1998}
{\sc E. Tadmor},
{\em Advanced Numerical Approximation of Nonlinear Hyperbolic
  Equations,'' Lectures Notes from CIME Course Cetraro, Italy, 1997},
  pp~1--150  in  Approximate solutions of nonlinear conservation laws, 
Number 1697 in Lecture Notes in Mathematics. Springer-Verlag, 1998,


\bibitem{Tadmor1990}
{\sc E. Tadmor},
{\em Shock capturing by the spectral viscosity method},
Computer Methods in Applied Mechanics and Engineering 80 (1990), pp.~197-208.


\bibitem{tanguay2003}
{\sc  M. Tanguay and T. Colonius},
{\em  Progress in modeling and simulation of shock wave lithotripsy},
 In  Fifth International Symposium on cavitation (CAV2003), OS-2-1-010, 2003.
  
 \bibitem{vanderHouwen1972}
{\sc  P.J. van der Houwen},
{\em  Explicit Runge--Kutta formulas with increased stability boundaries},
Numerische Mathematik 20  (1972), pp.~149-164.
  
  \bibitem{wang2005}
{\sc  Z.J. Wang and Y. Liu},
{\em The spectral difference method for the 2{D} {E}uler equations on
  unstructured grids},
 In { 17th AIAA Computational Fluid Dynamics Conference}, AIAA,  2005.

\bibitem{wang2007a}
{\sc Z.J. Wang, Y. Liu, G. May and A. Jameson},
{\em Spectral difference method for unstructured grids {II}: {E}xtension to
  the {E}uler equations}, Journal of Scientific Computing 32(1) (2007), pp.~45--71.

\bibitem{Williamson1980}
{\sc J.~H. Williamson},
{\em Low storage Runge--Kutta schemes},
Journal of Computational Physics, 35 (1980), pp.~48-56.

  \bibitem{zhang2006}
{\sc W. Zhang and A.I. MacFayden},
{\em {RAM}: A relativistic adaptive mesh refinement hydrodynamics code},
The Astrophysical Journal Supplement Series 164 (2006), pp.~255--279.

\bibitem{ZhangShu2010maximum}
{\sc X. Zhang and C.-W. Shu},
{\em On maximum-principle-satisfying high order schemes for scalar conservation laws},
Journal of Computational Physics 229 (2010), pp.~3091--3120.


\bibitem{ZhangShu2010positivity}
{\sc X. Zhang and C.-W. Shu},
{\em On positivity-preserving high order discontinuous Galerkin schemes for compressible Euler equations on rectangular meshes.}
Journal of Computational Physics  229 (2010), pp.~8918--8934.


\bibitem{ZhangShu2011maximum}
{\sc X. Zhang and C.-W. Shu},
{\em Maximum-principle-satisfying and positivity-preserving high-order schemes for conservation laws: 
survey and new developments}, 
Proceedings of the Royal Society A 467 (2011), pp.~2752--2776.


\bibitem{ZhangShu2011positivity}
{\sc X. Zhang and C.-W. Shu},
{\em Positivity-preserving high order discontinuous Galerkin schemes for compressible Euler equations with source terms},
Journal of Computational Physics  230 (2011), pp.~1238--1248.


\bibitem{ZhangShu2012}
{\sc X. Zhang and C.-W. Shu},
{\em Positivity-preserving high order finite difference WENO schemes for compressible Euler equations},
Journal of Computational Physics  231 (2012), pp.~2245--2258.





\end{thebibliography}
\end{document}